\theoremstyle{plain}
\newtheorem{theorem}{Theorem}[section]
\newtheorem{corollary}[theorem]{Corollary}
\newtheorem{proposition}[theorem]{Proposition}
\newtheorem{lemma}[theorem]{Lemma}
\theoremstyle{definition}
\newtheorem{definition}[theorem]{Definition}
\newtheorem{example}[theorem]{Example}
\newtheorem{examples}[theorem]{Examples}
\newtheorem{remark}[theorem]{Remark}
\numberwithin{equation}{section}
\DeclareMathOperator{\Id}{Id}
\DeclareMathOperator{\Seq}{Seq}
\DeclareMathOperator{\Cauchy}{Cauchy}
\DeclareMathOperator{\Conv}{Conv}
\DeclareMathOperator{\Zero}{Zero}
\begin{document}
	
	\title{Magma-valued metric spaces}
	
	%%% First author %%%
	
	\author{Peyman Nasehpour\orcidlink{0000-0001-6625-364X}}
	
	\curraddr{Academic Advisor and Education Mentor\\ Education Department\\ The New York Academy of Sciences\\ New York, NY, USA}
	
	%\urladdr{http://www.math.sjsu.edu/$\tilde{\:}$simic}
	
	\email{nasehpour@gmail.com}
	
	%\thanks{Partially supported by}
	
	\subjclass{06F05, 16Y30, 40A05.}
	%\date{\today}
	%\dedicatory{}
	\keywords{dense monoid, ordered near-rings, convergent sequences, metric spaces}
	
	\begin{abstract}
		In the second section, we introduce dense unital magmas and show that a near-ring is dense if and only if it has a positive element smaller that unity. In the third section, we discuss magma-valued metric spaces. The density property of the ordered unital magmas and monoids helps us to generalize a couple of classical results related to the convergence and Cauchy property of sequences. The last section is devoted to magma-valued normed groups and some subgroups of the additive group of Cauchy sequences.  
	\end{abstract}

	\maketitle

	\section{Introduction}
	
	In 1906, Fr\'{e}chet \cite{Frechet1906} formulated the axioms of metric spaces for the first time in his dissertation. Following Fr\'{e}chet's pioneering work, the concept of distance function has undergone numerous generalizations leading to the development of various mathematical fields. For example, in his 1934 work \cite{Kurepa1934}, Kurepa modified the axioms of metric spaces by removing the positivity requirement allowing for the possibility that a pseudometric space could satisfy the condition $d(x, y) = 0$ for $x \neq y$. Later in 1942, Menger introduced statistical metric spaces \cite{Menger1942} (check also \cite{SchweizerSklar1960}). The main purpose of this paper is to introduce the concept of magma-valued metric spaces and generalize some classical results of abstract mathematical analysis in this context.
	
	First we introduce some terminologies and notations, and then briefly report what we do in the rest of the paper. It is said that $(M,*)$ is a magma if $M$ is a nonempty set and $*: M \times M \rightarrow M$ is a function \cite[Definition 1.1]{Serre1992}. Let $(M,*)$ be a magma. It is said that a (binary) relation $R$ on $M$ is compatible with the magma $(M,*)$ if $a R b$ implies \[(a*c) R (b*c) \text{~and~} (c*a) R (c*b), \qquad \forall~a,b,c \in M.\] 
	
	Let us recall that a relation on a set is partial if it is reflexive, antisymmetric, and transitive. A relation $R$ on a set $M$ is linear if either $a R b$ or $b R a$ for all $a,b \in M$. In this paper, partial relations, also known as partial orderings, are denoted by $\leq$. If a partial ordering $\leq$ on a set $M$ is linear, it is said that $\leq$ is a total ordering on $M$ or $(M,\leq)$ is totally ordered.
	
	If $\leq$ is a partial ordering on $M$, then $(M,*,\leq)$ is said to be an ordered magma if $\leq$ is compatible with the magma $(M,*)$. As usual, if $\leq$ is a relation on $M$, then by $a < b$ it is meant that $a \leq b$ and $a \neq b$. Note that by the sentence ``$(M,*,<)$ is an ordered magma'' ($M$ is a magma or any other group-like algebraic structure), we mean that $<$ is compatible with $*$. This already means that if $a < b$, then $a*c < b*c$ and $c*a < b*a$, for all $a,b,c \in M$. 
	
	A magma $(M,*)$ is a unital magma if there is an element $0 \in M$ such that \[m*0 = 0*m = m, \qquad \forall~m \in M.\] In any ordered unital magma $(M,*,0,\leq)$, an element $m \in M$ is non-negative, if $0 \leq m$. Also, $m \in M$ is positive, if $0 < m$, i.e. $0\leq m$ and $m \neq 0$.
	
	In \S\ref{sec:densemagmas}, we introduce a property called density and find some examples for that. We define an ordered unital magma $(M,*,0,\leq)$ to be {\em dense} (see Definition \ref{densemagmadef}) if 
	
	\begin{itemize}
		\item for any positive element $\epsilon$ in $M$, there are two positive elements $\beta$ and $\gamma$ with $\beta*\gamma < \epsilon$.
	\end{itemize}
	
	Now, we proceed to report some results of the paper that we present in \S\ref{sec:magmavaluedmetricspaces} in order to justify our definition for density. 
	
	Let $(M,*,0,\leq)$ be an ordered unital magma. As a generalization to monoid-valued metric spaces (see \cite{Braunfeld2017} and \cite{Conant2019}), we say $(X,d)$ is an $M$-metric space (see Definition \ref{magmavaluedmetricspacesdef}) if $d: X \times X \rightarrow M$ is a function such that for all $x,y,z \in X$, the following properties hold:
	
	\begin{enumerate}
		\item $d(x,y) \geq 0$, and $d(x,y) = 0$ if and only if $x = y$.
		\item $d(x,y) = d(y,x)$.
		\item $d(x,z) \leq d(x,y) * d(y,z)$.
	\end{enumerate}
	
	By definition, a sequence $(x_n)$ in an $M$-metric space $X$ is convergent to $x \in X$, denoted by \[\lim_{n \to +\infty} x_n = x,\] if for any positive element $\epsilon$ in $M$, there is a natural number $N$ such that $n \geq N$ implies $d(x_n,x) < \epsilon$.
	
	In Proposition \ref{limitsumofsequences}, we prove that if $(M,*,0,\leq)$ is a dense unital magma and $X$ is an $M$-metric space, then a convergent sequence has a unique limit. Not only that but also density property is useful for generalizing a couple of other classical results in abstract analysis. For example, with the help of density, we prove that Cauchyness is implied by convergence as we explain in the following:
	
	Let $M$ be an ordered unital magma and $X$ an $M$-metric space. It is natural to define that a sequence $(x_n)$ in $X$ is a Cauchy sequence if for any positive element $\epsilon$ in $M$ there is a natural number $N$ such that $m,n \geq N$ implies $d(x_m, x_n) < \epsilon$ (see Definition \ref{Cauchysequencedef}). Next, in Theorem \ref{convergentisCauchy}, we prove that if $M$ is a dense unital magma and $X$ an $M$-metric space, then any convergent sequence in $X$ is a Cauchy sequence. Also in Theorem \ref{CauchyconvergentsubsequenceOF}, we show that if $M$ is a dense unital magma and $X$ is an $M$-metric space, then any Cauchy sequence having a convergent subsequence is convergent.
	
	Surprisingly, the definition of density property given above is equivalent to the definition of density property for ordered rings given in \cite{Heuer1974}, and also the definition of density in set theory given in \cite{Jech2003}. As a matter of fact, we prove that a totally ordered ring with 1 is dense if and only if its additive monoid is dense. Even more, we generalize this for near-rings. Recall that an algebraic structure $(N,+,\cdot,0)$ is a near-ring (see Definition 1.1 in \cite{Pilz1983}) if the following conditions are satisfied:
	
	\begin{enumerate}
		\item $(N,+)$ is a (not necessarily abelian) group and 0 is the identity element of the group $N$.
		\item $(N,\cdot)$ is a semigroup.
		\item The right-distributive law holds, i.e. \[(x+y)z = xz + yz, \qquad \forall~x,y,z \in N.\] 
	\end{enumerate}
	
	Note that, by definition, a near-ring $N$ is with 1 if $(N,\cdot,1)$ is a monoid.
	
	It is said that $(N,+,\cdot,0,\leq)$ is an ordered near-ring (see Definition 9.122 in \cite{Pilz1983}) if $\leq$ is a partial ordering on $N$ and $(N,+,\cdot,0)$ is a near-ring such that the following properties hold:
	
	\begin{enumerate}
		\item $(N,+,0,\leq)$ is an ordered group.
		\item If $0 \leq x$ and $0 \leq y$, then $0 \leq xy$, for all $x,y \in N$. 
	\end{enumerate}
	
	We say $(N,+,\cdot,0,<)$ - for short, $(N,<)$ - is an ordered near-ring if $(N,\leq)$ is an ordered near-ring and $0 < x$ and $0<y$ imply $0 < xy$, for all $x,y \in N$.
	
	In Theorem \ref{denserings}, we prove that if $\leq$ is a total ordering on $N$ and $(N,<)$ is an ordered near-ring with $1\neq 0$, then the following statements are equivalent:
	
	\begin{enumerate}
		\item The near-ring $N$ is dense, i.e. the positive cone $P$ of $N$ has no least element \cite{Heuer1974}.
		\item There is a positive element $\alpha$ in $N$ smaller than 1.
		\item The ordered monoid $(N,+,0,\leq)$ is a dense monoid.
		\item For any positive element $\epsilon$ in $N$ and a positive integer number $n$, we can find $n$ positive elements $\{\epsilon_i\}^n_{i=1}$ in $N$ satisfying the following inequality: \[\sum_{i=1}^{n} \epsilon_i < \epsilon.\]
		
		\item The ordered set $N$ is dense, i.e. for $r < t$ in $N$ there is an $s$ in $N$ with $r < s < t$ \cite[Definition 4.2]{Jech2003}.
	\end{enumerate}
	
	In \S\ref{sec:normedgroups}, we introduce magma-valued norms in the following way (see Definition \ref{Mnormedgroupdef}):
	
	Let $(M,+,0,\leq)$ be an ordered unital magma. A group $(G,+)$ is, by definition, an $M$-normed group if there is a function $\Vert \cdot \Vert: G \rightarrow M$ with the following properties:
	\begin{enumerate}
		\item $\Vert g \Vert \geq 0$, and $\Vert g \Vert = 0$ if and only if $g = 0$, for all $g\in G$.
		\item $\Vert g-h \Vert \leq \Vert g \Vert + \Vert h \Vert$, for all $g,h \in G$.
	\end{enumerate}
	
	These norms evidently induce magma-valued metric spaces (check Proposition \ref{inducedmagmavaluedmetricspace}). By considering this, in Theorem \ref{Convergentsequencesabeliangroup}, we prove that if $M$ is a dense unital magma and $G$ an $M$-normed group, then $\Zero(G)$ is a subgroup of $\Conv(G)$ and \[\Conv(G)/\Zero(G) \cong G,\] where $\Conv(G)$ is the group of all convergent sequences in $G$ and $\Zero(G)$ is the set of all sequences in $\Conv(G)$ convergent to $0 \in G$.
	
	In this paper, a binary operation of a magma $M$ is sometimes denoted by ``+'', although ``+'' is not necessarily associative or commutative unless explicitly stated. Also, if the binary operation of a monoid or a group is denoted additively, similar to near-ring theory \cite{Pilz1983}, it does not mean that the addition is necessarily commutative unless explicitly stated. Whenever we say an algebraic structure is ordered, we mean that its ordering is partial, otherwise we explicitly assert that the ordering is total (linear). For the general theory of ordered algebraic structures see \cite{Fuchs1963}. For ordered groups consult with \cite{Glass1999}. Some results in this paper are generalizations of their counterparts on pages 18--24 in Ovchinnikov's 2021 book \cite{Ovchinnikov2021}.
	
	\section{Dense magmas}\label{sec:densemagmas}
	
	\begin{definition}\label{densemagmadef}
		Let $(M,*,0,\leq)$ be a unital magma. We say $M$ is a dense unital magma if for any positive element $\epsilon$ in $M$, there are two positive elements $\beta$ and $\gamma$ with $\beta*\gamma < \epsilon$.
	\end{definition}
	
	\begin{remark}
		Let $(M,*,0,\leq)$ be an ordered group-like algebraic structure such that 0 is an identity element of $M$. We say $M$ is dense if $(M,*,0,\leq)$ as an ordered unital magma is dense.
	\end{remark}
	
	\begin{examples} In the following, we give a couple of examples:
		
		\begin{enumerate}
			
			\item Let $K$ be any subfield of the field of real numbers. Evidently, $(K,+,0,\leq)$ is a dense unital magma because for any positive number $\epsilon \in K$, we have \[2\epsilon/5 + 2\epsilon/5 < \epsilon.\]
			
			\item The additive group of integer numbers $(\mathbb Z,+,0,\leq)$ is not dense.
			
			\item Let $M$ be the set of all functions of the form $f: \mathbb R \rightarrow \mathbb R$. Let addition be component-wise on $M$ and define $f \leq g$ in $M$, if $f(x) \leq g(x)$, for all $x\in \mathbb R$. Then, $M$ is a dense unital magma because for the given $0 < f$, the function $g = 2f/5$ satisfies $0 < g$ and \[g + g < f.\]
		\end{enumerate}
	\end{examples}
	
	Let us recall that if $(P,\leq)$ is a poset with no least element, one may annex an element $-\infty$ to $P$ and extend $\leq$ as follows: \[-\infty < x \qquad \forall~x \in P.\] Then, $-\infty$ is the least element of the new poset $(P \cup \{-\infty\}, \leq)$.
	
	\begin{proposition}\label{maxmonoids}
		Let $(T,\leq)$ be a totally ordered set with no least element. Annex the least element $-\infty$ to $T$. Then, $(T\cup \{-\infty\}, \max)$ is a dense monoid.	
	\end{proposition}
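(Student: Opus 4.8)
\emph{Plan.} Write $M = T\cup\{-\infty\}$ and equip it with the operation $\max$ and the order $\leq$ extended so that $-\infty$ is the least element. The plan is to verify, in order, that (i) $(M,\max,-\infty)$ is a commutative monoid, (ii) $\leq$ is compatible with $\max$, so that $(M,\max,-\infty,\leq)$ is an ordered unital magma (in fact an ordered monoid), and (iii) the density condition of Definition \ref{densemagmadef} holds.

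\emph{Step (i).} Since $\leq$ is a total ordering on $M$, the binary operation $\max$ is well defined on $M$; commutativity is immediate, and associativity follows from the fact that for any $a,b,c\in M$ the element $\max(a,\max(b,c))$ and the element $\max(\max(a,b),c)$ are both characterised as the $\leq$-largest of $\{a,b,c\}$, which exists because $\leq$ is total and the set is finite. For the identity, note that $-\infty\leq x$ for every $x\in M$, hence $\max(x,-\infty)=\max(-\infty,x)=x$ for all $x\in M$; thus $-\infty$ is an identity element, and $M$ is a unital magma (indeed a commutative monoid).

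\emph{Step (ii).} I would check that $a\leq b$ implies $\max(a,c)\leq\max(b,c)$ by a short case analysis: if $c\geq b$ then both sides equal $c$; if $a\leq c<b$ then $\max(a,c)=c<b=\max(b,c)$; and if $c<a$ then $\max(a,c)=a\leq b=\max(b,c)$. The inequality $\max(c,a)\leq\max(c,b)$ then follows by commutativity. Hence $\leq$ is compatible with $\max$ and $(M,\max,-\infty,\leq)$ is an ordered unital magma.

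\emph{Step (iii).} The positive elements of $M$ are exactly the elements of $T$, since $-\infty<x$ holds precisely when $x\in T$. Let $\epsilon$ be a positive element of $M$, i.e.\ $\epsilon\in T$. Because $T$ has no least element, there is some $\beta\in T$ with $\beta<\epsilon$. Put $\gamma:=\beta$. Then $\beta$ and $\gamma$ are positive (they lie in $T$), and $\beta*\gamma=\max(\beta,\beta)=\beta<\epsilon$. This establishes density. I do not expect a genuine obstacle here; the only point requiring a little care is to record that ``positive'' must be verified for the witnesses $\beta,\gamma$, which is immediate once one observes that the positive cone of $M$ is $T$ itself — so the ``no least element'' hypothesis on $T$ is exactly what is needed and is used in the one place it can be.
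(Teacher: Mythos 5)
Your proposal is correct and follows essentially the same route as the paper's proof: both verify the ordered commutative monoid structure (the paper treats this as evident where you spell out the details) and then establish density by choosing, for a given $\epsilon\in T$, an element $\beta\in T$ with $\beta<\epsilon$ via the no-least-element hypothesis and taking $\gamma=\beta$ so that $\max\{\beta,\gamma\}=\beta<\epsilon$. Your explicit observation that the positive cone is exactly $T$ is a helpful clarification but does not change the argument.
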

	
	\begin{proof}
		It is evident that $(T\cup \{-\infty\}, \max)$ is a commutative monoid and its neutral element is $-\infty$. Note that by definition, for any $a\in T$, we have $-\infty < a$. It is also easy to see that $a \leq b$ implies $\max\{a,c\} \leq \max\{b,c\}$ for all $a,b,c \in T\cup \{-\infty\}$. Therefore, $T\cup \{-\infty\}$ is an ordered monoid. 
		
		Since $T$ has no least element, for the given element $\epsilon \in T$, there is an element $\beta \in T$ such that $\beta < \epsilon$. Now, if we set $\gamma = \beta$, we see that $\max\{\beta,\gamma\} = \beta < \epsilon.$ Hence, $T$ is a dense monoid, as required.
	\end{proof}
	
	\begin{remark}
		The monoid $(\mathbb R \cup \{-\infty\}, \max)$ which is an example of a dense monoid (Proposition \ref{maxmonoids}) is extensively used in idempotent analysis. For more, refer to \cite{KolokoltsovMaslov1997} and \cite{LMS2002}.
	\end{remark}
	
	Let $G$ be an ordered group. We say $G$ is a dense group if the monoid $(G,+,0,\leq)$ is dense. In the following, we give an example of a dense but non-commutative monoid:
	
	\begin{theorem}\label{totallyorderednonabeliangroup}
		The set $\mathbb R \times \mathbb R$ equipped with the following binary operation and the lexicographical ordering is a non-abelian totally ordered dense group: \[(r_1,r_2)*(s_1,s_2) = (r_1 + s_1, r_2 e^{s_1} + s_2).\]
	\end{theorem}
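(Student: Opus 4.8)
The plan is to verify, in turn, that $G:=\mathbb{R}\times\mathbb{R}$ with the given operation $*$ is a group, that $*$ is not commutative, that the lexicographical order is a total order compatible with $*$, and finally that $G$ is dense in the sense of Definition~\ref{densemagmadef}.

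First I would check the group axioms by direct computation. Associativity follows by expanding both $[(r_1,r_2)*(s_1,s_2)]*(t_1,t_2)$ and $(r_1,r_2)*[(s_1,s_2)*(t_1,t_2)]$ and observing that each equals $(r_1+s_1+t_1,\; r_2 e^{s_1+t_1}+s_2 e^{t_1}+t_2)$; the neutral element is $(0,0)$ because $e^0=1$; and the inverse of $(r_1,r_2)$ is $(-r_1,\,-r_2 e^{-r_1})$, which a short check shows is a two-sided inverse. (Alternatively one recognizes $G$ as built from the semidirect product of $(\mathbb{R},+)$ acting on $(\mathbb{R},+)$ by multiplication by $e^{s_1}$, but the direct verification is brief.) Non-commutativity is witnessed by $(1,0)*(0,1)=(1,1)$ while $(0,1)*(1,0)=(1,e)$, and $1\neq e$.

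Next I would show the lexicographical order $\leq$, defined by $(r_1,r_2)\leq(s_1,s_2)$ iff $r_1<s_1$, or $r_1=s_1$ and $r_2\leq s_2$, is a total order compatible with $*$. Totality is immediate. For compatibility, assume $(a_1,a_2)\leq(b_1,b_2)$. If $a_1<b_1$, then since the first coordinate of any product is the sum of the first coordinates, replacing $(a_1,a_2)$ by $(b_1,b_2)$ strictly increases the first coordinate of both $(a_1,a_2)*(c_1,c_2)$ and $(c_1,c_2)*(a_1,a_2)$, so strict inequality is preserved on both sides. If instead $a_1=b_1$ and $a_2\leq b_2$, the first coordinates of the relevant products coincide, and the comparison of second coordinates reduces to comparing $a_2 e^{c_1}$ with $b_2 e^{c_1}$ for right multiplication, and $c_2 e^{a_1}+a_2$ with $c_2 e^{a_1}+b_2$ (using $a_1=b_1$) for left multiplication; in both cases $a_2\leq b_2$ is preserved because $e^{c_1}>0$. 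Hence $(G,*,(0,0),\leq)$ is a totally ordered group.

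Finally I would prove density. The positive elements of $G$ are exactly the pairs $(\epsilon_1,\epsilon_2)$ with $\epsilon_1>0$, or with $\epsilon_1=0$ and $\epsilon_2>0$. Given a positive $\epsilon$, in the first case set $\beta=\gamma=(\epsilon_1/3,0)$, so that $\beta*\gamma=(2\epsilon_1/3,0)<(\epsilon_1,\epsilon_2)$; in the second case set $\beta=\gamma=(0,\epsilon_2/3)$, so that $\beta*\gamma=(0,2\epsilon_2/3)<(0,\epsilon_2)$. In either case $\beta,\gamma$ are positive with $\beta*\gamma<\epsilon$, so $G$ is dense. I do not expect a genuine obstacle: the only step needing slight care is the compatibility of $\leq$ with left multiplication, where the exponential twist in the second coordinate appears, but it causes no difficulty because in the decisive case that twist depends only on the equal first coordinates and $e^{c_1}$ is always strictly positive, so the order is never reversed.
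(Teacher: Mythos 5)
Your proof is correct and follows essentially the same route as the paper: the group and order axioms are treated as a routine verification (which you carry out explicitly), and density is established by the same case split on whether the first coordinate of the positive element is strictly positive or zero, choosing in each case two positive elements whose product stays below $\epsilon$ in the lexicographic order. The only difference is in the specific witnesses chosen (e.g.\ $(\epsilon_1/3,0)$ versus the paper's $(r_1,s)$ with $r_1+r_2<r$), which is immaterial.
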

	
	\begin{proof}
		It is routine to see that $(\mathbb R \times \mathbb R, *,(0,0), <)$ is a non-abelian totally ordered group, where $<$ is the lexicographical ordering on $\mathbb R \times \mathbb R$ (see p. 140 in \cite{vanOystaeyen2000}). Let $(r,s)$ be a positive element of $\mathbb R \times \mathbb R$, i.e. $(0,0) < (r,s)$. Then, we have two cases: Either $0 < r$, or $0 = r$ and $0 < s$.
		
		Assume that $0 < r$. So, there are two positive real numbers $r_1$ and $r_2$ with $r_1 + r_2 < r$. Now, it is obvious that \[(r_1,s) * (r_2,s) < (r,s).\]
		
		Assume that $0 = r$ and $0 < s$. Therefore, there are two positive real numbers $s_1$ and $s_2$ such that $s_1 + s_2 < s$. Observe that \[(0,s_1) < (0,s),~(0,s_2) < (0,s) \] and \[(0,s_1) * (0,s_2) = (0,s_1 + s_2) < (0,s).\] Hence, $\mathbb R \times \mathbb R$ is dense and the proof is complete.
	\end{proof}
	
	\begin{lemma}\label{densemonoidlem}
		Let $(M,+,0,\leq)$ be a dense (not necessarily commutative) monoid. Then, for each $\epsilon > 0$ and $n \in \mathbb N$, there are $n$ positive elements $\epsilon_i$ in $M$ with \[\sum_{i=1}^{n} \epsilon_i < \epsilon.\]
	\end{lemma}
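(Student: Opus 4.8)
The plan is to prove Lemma \ref{densemonoidlem} by induction on $n$. The base case $n=1$ is immediate: density of $M$ gives positive $\beta,\gamma$ with $\beta+\gamma<\epsilon$, so in particular $\beta>0$ satisfies $\beta<\epsilon$ (using that $0$ is the identity and $\gamma>0$ together with compatibility of $<$ with $+$, so $\beta=\beta+0<\beta+\gamma<\epsilon$). Actually it is cleaner to establish first the auxiliary fact that \emph{every} positive $\epsilon$ admits a positive $\delta$ with $\delta<\epsilon$, which is exactly the $n=1$ statement and follows from density as just indicated.

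\textbf{Inductive step.} Suppose the claim holds for $n$; I want $n+1$ positive elements summing to less than $\epsilon$. First apply density to $\epsilon$ to get positive $\beta,\gamma$ with $\beta+\gamma<\epsilon$. Now apply the induction hypothesis to the positive element $\beta$: there are positive $\epsilon_1,\dots,\epsilon_n$ with $\sum_{i=1}^n \epsilon_i < \beta$. Set $\epsilon_{n+1}=\gamma$. Then, using that $<$ is compatible with $+$ (so that $a<b$ implies $a+c<b+c$ and $c+a<c+b$), one gets
\[
\sum_{i=1}^{n+1}\epsilon_i = \Bigl(\sum_{i=1}^{n}\epsilon_i\Bigr) + \gamma < \beta + \gamma < \epsilon,
\]
with transitivity of $\leq$ (hence of $<$) closing the chain. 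Each $\epsilon_i$ is positive by construction, so the induction is complete.

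\textbf{The main obstacle} I anticipate is purely bookkeeping around non-commutativity and the precise meaning of compatibility of the strict relation $<$ with $+$: since $M$ is only assumed to be a not-necessarily-commutative monoid, one must be careful to use the one-sided compatibility laws in the correct order (adding $\gamma$ on the right to the inequality $\sum \epsilon_i < \beta$, then comparing $\beta+\gamma$ with $\epsilon$), and to recall from the paper's conventions that ``$(M,+,0,<)$ compatible'' delivers both $a<b\Rightarrow a+c<b+c$ and $a<b\Rightarrow c+a<c+b$. Beyond that, the argument is routine; no genuine difficulty arises because associativity lets us freely regroup $\sum_{i=1}^{n+1}\epsilon_i$ as $\bigl(\sum_{i=1}^n\epsilon_i\bigr)+\gamma$.
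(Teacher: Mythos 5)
Your proof is correct and follows essentially the same route as the paper: induction on $n$, using density to split one positive element into two. The only real difference is cosmetic: the paper keeps the $n$ elements already produced and splits the last summand $\epsilon_k$ into two smaller positive pieces, whereas you first split $\epsilon$ itself as $\beta+\gamma<\epsilon$ and then recurse into $\beta$. Both decompositions work and neither buys anything over the other.

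One point deserves correction. You invoke compatibility of the \emph{strict} relation $<$ with $+$ (``$a<b$ implies $a+c<b+c$''), citing the paper's convention for structures written as $(M,+,0,<)$. But the lemma's hypothesis is that $(M,+,0,\leq)$ is a dense monoid, so only the non-strict compatibility $a\leq b\Rightarrow a+c\leq b+c$ and $c+a\leq c+b$ is available; strict compatibility is an extra assumption the statement does not grant. Fortunately your argument survives with the weaker hypothesis: from $\sum_{i=1}^{n}\epsilon_i<\beta$ you get $\bigl(\sum_{i=1}^{n}\epsilon_i\bigr)+\gamma\leq\beta+\gamma<\epsilon$, and since $a\leq b$ together with $b<c$ gives $a<c$ in any poset, the strict conclusion follows. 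The same remark applies to your base case, where $\beta=\beta+0\leq\beta+\gamma<\epsilon$ already yields $\beta<\epsilon$. So this is a misattribution of hypotheses rather than a gap, but in a paper whose whole point is tracking which monotonicity axioms are assumed, it is exactly the kind of slip to avoid.
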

	
	\begin{proof}
		The proof is by induction on $n$. By Definition \ref{densemagmadef}, the cases $n=1$ and $n=2$ hold evidently. Now, assume that $n = k$ holds. Therefore, for the given $\epsilon > 0$, we have \[\sum_{i=1}^{k} \epsilon_i < \epsilon.\] Since $\epsilon_k$ is positive, we can find two positive elements $\beta_1$ and $\epsilon_{k+1}$ such that \[\beta_1 + \epsilon_{k+1} < \epsilon_k.\] Now, observe that \[\epsilon_1 + \dots + \epsilon_{k-1} + \beta_1 + \epsilon_{k+1} \leq \sum_{i=1}^{k}\epsilon_i < \epsilon.\] This completes the proof.
	\end{proof}
	
	Let $\leq$ be a total ordering on $R$ and $(R,<)$ an ordered ring with 1. It is easy to see that the positive cone $P$ of $R$ has the least element if and only if 1 is the least element of $P$ \cite{Heuer1974}. Heuer \cite{Heuer1974} calls an ordered ring discrete if its positive cone has the least element; otherwise dense. Note that in order theory, a totally ordered set $(S,<)$ is dense if for each $a,b \in S$ with $a < b$ there is an element $c\in S$ such that $a < c < b$ \cite[Definition 4.2]{Jech2003}. The following result supports the use of the term ``dense unital magma'' in Definition \ref{densemagmadef}:
	
	\begin{theorem}\label{denserings}
		Let $\leq$ be a total ordering on $N$. Also, let $(N,<)$ be an ordered near-ring with $1\neq 0$. Then, the following statements are equivalent:
		
		\begin{enumerate}
			\item The near-ring $N$ is dense, i.e. the positive cone $P$ of $N$ has no least element.
			\item There is a positive element $\alpha$ in $N$ smaller than 1.
			\item The ordered monoid $(N,+,0,\leq)$ is a dense monoid.
			\item For any positive element $\epsilon$ in $N$ and a positive integer number $n$, we can find $n$ positive elements $\{\epsilon_i\}^n_{i=1}$ in $N$ satisfying the following inequality: \[\sum_{i=1}^{n} \epsilon_i < \epsilon.\]
			
			\item The ordered set $N$ is dense, i.e. for $r < t$ in $N$ there is an $s$ in $N$ with $r < s < t$.
		\end{enumerate}
	\end{theorem}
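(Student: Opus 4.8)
The plan is to run a single cycle of implications $(1)\Rightarrow(2)\Rightarrow(3)\Rightarrow(4)\Rightarrow(5)\Rightarrow(1)$, after first recording the auxiliary fact that $0<1$ in $N$. That preliminary fact comes from the near-ring identities $0\cdot x=0$ and $(-1)(-1)=1$, both consequences of the single (right) distributive law together with the group axioms: if one had $1<0$, then $0<-1$, hence $0<(-1)(-1)=1$ by the product-positivity axiom, contradicting totality, while $1=0$ is excluded by hypothesis. In particular $1\in P$. The implication $(1)\Rightarrow(2)$ is then immediate: if $P$ has no least element, then $1\in P$ is not the least element of $P$, so by totality some $\alpha\in P$ satisfies $\alpha<1$.

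The step I expect to need the most care is $(2)\Rightarrow(3)$, precisely because $(N,+)$ need not be commutative and only right distributivity is available, so expressions such as $\alpha+(1-\alpha)$ cannot be manipulated freely. The intended computation is this. Given $\alpha\in P$ with $\alpha<1$, set $\delta:=-\alpha+1$; adding $-\alpha$ on the left to $\alpha<1$ shows $\delta>0$, and $\alpha+\delta=1$ by associativity. First I would establish the auxiliary inequality $\alpha u<u$ for every $u>0$: indeed $(-\alpha+1)u=-(\alpha u)+u$ by right distributivity (using $(-\alpha)u=-(\alpha u)$, itself obtained from $0\cdot u=0$), and the left-hand side is positive by the product-positivity axiom, so $\alpha u<u$. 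Now fix $\epsilon>0$. By right distributivity $\alpha\epsilon+\delta\epsilon=(\alpha+\delta)\epsilon=\epsilon$, whereas $\alpha^{2}\epsilon=\alpha(\alpha\epsilon)<\alpha\epsilon$ by the auxiliary inequality applied to $u=\alpha\epsilon$. Hence $\alpha^{2}\epsilon+\delta\epsilon<\alpha\epsilon+\delta\epsilon=\epsilon$ by compatibility of $<$ with $+$, and since $\alpha^{2}\epsilon>0$ and $\delta\epsilon>0$, the positive elements $\beta:=\alpha^{2}\epsilon$ and $\gamma:=\delta\epsilon$ witness that $(N,+,0,\leq)$ is a dense monoid.

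The remaining implications should be routine. The implication $(3)\Rightarrow(4)$ is exactly Lemma \ref{densemonoidlem} applied to the dense monoid $(N,+,0,\leq)$. For $(4)\Rightarrow(5)$, given $r<t$ in $N$ the element $-r+t$ is positive, so $(4)$ with $n=1$ produces a positive $\epsilon_{1}<-r+t$, and then $s:=r+\epsilon_{1}$ satisfies $r<s<t$ after adding $r$ on the left. Finally, for $(5)\Rightarrow(1)$: were $m$ the least element of $P$, then $0<m$, so order-density of $N$ would give $s$ with $0<s<m$, whence $s\in P$ and $s<m$, a contradiction; thus $P$ has no least element. This closes the cycle and proves the theorem. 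The only genuine subtlety, as indicated, is the non-commutative, right-distributive bookkeeping in $(2)\Rightarrow(3)$; everything else reduces to trichotomy, the product-positivity axiom, and the already-proved density lemma.
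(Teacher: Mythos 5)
Your proof is correct and follows essentially the same route as the paper: the crux in both is $(2)\Rightarrow(3)$, handled via right distributivity with the positive pair $\alpha^2\epsilon$ and (in your case) $(-\alpha+1)\epsilon$ versus the paper's $(-\alpha^2+\alpha)\epsilon$, both of which sum to something below $\epsilon$. The remaining differences are cosmetic: you prove $0<1$ directly where the paper cites Pilz, and you close a single cycle $(1)\Rightarrow\cdots\Rightarrow(5)\Rightarrow(1)$ where the paper proves the four-cycle $(1)$--$(4)$ and then attaches $(5)$ via $(4)\Rightarrow(5)$ and $(5)\Rightarrow(2)$.
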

	
	\begin{proof}
		$(1) \implies (2)$: Since $\leq$ is a total ordering on the near-ring $N$, by Remark 9.134 in \cite{Pilz1983}, 1 is a positive element of $N$. The set of positive elements of $N$ has no least element. Therefore, there must be an element between 0 and 1.
		
		$(2) \implies (3)$: Let there be an $\alpha \in N$ with $0 < \alpha < 1$. Since $(N,<)$ is an ordered near-ring, we have \[0 < -\alpha + 1.\] This implies that $0 < (-\alpha + 1) \alpha$. By the right-distributive law, we have $0 < -\alpha^2 + \alpha$. On the other hand, since $0 < \alpha$, we have $0 < \alpha^2$. Now, let the positive element $\epsilon$ be given and set \[\epsilon_1 = \alpha^2 \epsilon \text{~and~} \epsilon_2 = (-\alpha^2+\alpha) \epsilon.\] Observe that $\epsilon_1$ and $\epsilon_2$ are positive elements and in view of Proposition 1.5 in \cite{Pilz1983}, we have \[\epsilon_1 + \epsilon_2 = \alpha \epsilon < \epsilon.\]
		
		$(3) \implies (4)$: Lemma \ref{densemonoidlem}.
		
		$(4) \implies (1)$: For any positive element of $N$, we can find a smaller positive element in $N$. This means that the positive cone of $N$ has no least element.
		
		Up to now, we have proved that the statements (1), (2), (3), and (4) are equivalent. Now, we show that the statement (5) is also equivalent to any of them:
		
		$(5) \implies (2)$: Since $0 < 1$, we can find an $\alpha \in N$ with $0 < \alpha < 1$.
		
		$(4) \implies (5)$: Let $r < t$. This implies that $0 < -r+t$. So, by assumption, there is a positive element $\epsilon$ in $N$ with \[0 < \epsilon < -r+t.\] Evidently this implies that $r < r + \epsilon < t$ showing that $(N,<)$ is a dense ordered set and the proof is complete.
	\end{proof}
	
	\begin{corollary}
		Let $\leq$ be a total ordering on $N$. Also, let $(N,<)$ be an ordered near-ring with $1 \neq 0$. Assume that $S$ is a sub-near-ring of $N$ such that $1\in S$. If $S$ is a dense near-ring, then so is the near-ring $N$. 
	\end{corollary}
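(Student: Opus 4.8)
The plan is to reduce the statement to the equivalence $(1)\Leftrightarrow(2)$ of Theorem \ref{denserings}, using condition (2) as a bridge between $S$ and $N$. The point of condition (2) is that it is a purely ``internal'' statement about a single element, so it transfers painlessly along the inclusion $S\hookrightarrow N$, whereas the definition of density (condition (1)) quantifies over the positive cone and is less obviously transported.

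First I would check that the hypotheses of Theorem \ref{denserings} apply to $S$ itself. Since $S$ is a sub-near-ring of $N$ containing $1$, it is a near-ring with $1$, and $1\neq 0$ in $S$ because this already holds in $N$. Equipping $S$ with the restriction of $\leq$, this restriction is again a total ordering, and compatibility of $<$ with $+$ together with the positivity of products of positive elements are inherited from $(N,<)$; hence $(S,<)$ is an ordered near-ring with $1\neq 0$ and $\leq$ is total on $S$.

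Next, by hypothesis $S$ is a dense near-ring, i.e. condition (1) of Theorem \ref{denserings} holds for $S$. Applying the implication $(1)\Rightarrow(2)$ to $S$ produces a positive element $\alpha\in S$ with $0<\alpha<1$, these inequalities being taken in $S$. Because the ordering on $S$ is merely the restriction of the ordering on $N$ (so that the inclusion $S\hookrightarrow N$ is order-preserving and order-reflecting), the same chain $0<\alpha<1$ holds in $N$. Thus $\alpha$ is a positive element of $N$ smaller than $1$, which is exactly condition (2) of Theorem \ref{denserings} for $N$. Invoking the implication $(2)\Rightarrow(1)$ for $N$ then shows that the positive cone of $N$ has no least element, i.e. $N$ is dense.

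I do not expect any genuine obstacle here: the entire content is the observation that condition (2) of Theorem \ref{denserings} is witnessed by a single element, which survives passage from the sub-near-ring to the ambient near-ring. The only step deserving a line of justification is the remark that $S$ carries the induced order, so that ``positive'' and ``$<1$'' mean the same thing whether interpreted in $S$ or in $N$.
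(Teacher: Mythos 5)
Your proposal is correct and follows exactly the paper's own argument: use Theorem \ref{denserings} to extract a single witness $\alpha\in S$ with $0<\alpha<1$, observe that $\alpha\in N$ with the same inequalities under the induced order, and apply the theorem again in $N$. The extra care you take in verifying that $(S,<)$ satisfies the hypotheses of Theorem \ref{denserings} is a reasonable elaboration that the paper leaves implicit.
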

	
	\begin{proof}
		Since $S$ is a dense near-ring, by Theorem \ref{denserings}, there is an element $\alpha$ in $S$ with $0 < \alpha < 1$. Since $S \subseteq N$, we have $\alpha \in N$. Thus by Theorem \ref{denserings}, $N$ is also a dense near-ring and the proof is complete.
	\end{proof}
	
	\begin{corollary}\label{tofielddense}
		Let $(F,<)$ be a totally ordered field. Then, $(F,+,0,\leq)$ is a dense monoid.
	\end{corollary}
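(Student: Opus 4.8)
The plan is to deduce this immediately from Theorem \ref{denserings}. First I would observe that a totally ordered field $(F,<)$ is in particular a totally ordered ring with $1 \neq 0$, hence an ordered near-ring with $1 \neq 0$ to which Theorem \ref{denserings} applies. The only genuine content is to check hypothesis (1) of that theorem, namely that the positive cone $P$ of $F$ has no least element; then the equivalence $(1) \iff (3)$ in Theorem \ref{denserings} gives exactly that $(F,+,0,\leq)$ is a dense monoid, which is the claim.

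To verify that $P$ has no least element, I would argue by contradiction: suppose $\alpha$ is the least positive element of $F$. Since $F$ is a field, $\alpha$ is invertible, and since $0 < \alpha$ we also have $0 < \alpha^{-1}$. A clean way to produce a smaller positive element is to consider $\alpha/2$, using that $1 + 1 \neq 0$ in a totally ordered field (a totally ordered field has characteristic zero, since $0 < 1$ forces $0 < 1 < 1+1 < \cdots$, so $2 = 1+1$ is a positive, hence invertible, element). Then $0 < \alpha/2 < \alpha$ because multiplying the strict inequality $0 < 1 < 2$ by the positive element $\alpha/2$ yields $0 < \alpha/2 < \alpha$. This contradicts minimality of $\alpha$, so $P$ has no least element.

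Alternatively — and perhaps more in the spirit of the preceding results — I would invoke the equivalence $(1) \iff (2)$ of Theorem \ref{denserings}: it suffices to exhibit a positive element of $F$ smaller than $1$, and $1/2$ (equivalently, $(1+1)^{-1}$) works, since $0 < 1 < 1+1$ multiplied by the positive element $(1+1)^{-1}$ gives $0 < (1+1)^{-1} < 1$. Either route is a one-line verification once the characteristic-zero remark is in place.

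There is essentially no obstacle here; the statement is a corollary in the literal sense. The only point requiring a moment's care is justifying that $2 = 1+1$ is invertible — i.e., that the field does not have characteristic $2$ — which follows automatically from the total order, as noted above. Everything else is a direct citation of Theorem \ref{denserings}.
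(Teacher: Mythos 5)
Your proposal is correct and matches the paper's intent: the corollary is stated without proof precisely because it follows immediately from Theorem \ref{denserings}, via the observation that a totally ordered field is a totally ordered (near-)ring with $1\neq 0$ in which $(1+1)^{-1}$ is a positive element smaller than $1$. Your verification that $2=1+1$ is positive and invertible, and hence that condition (2) of that theorem holds, is exactly the one-line check the paper leaves to the reader.
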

	
	Let $(G,+)$ be a group and 0 be its neutral element. Assume that $M_0(G)$ is the set of all functions $f$ from $G$ into $G$ with $f(0) = 0$. Then, $M_0(G)$ equipped with component-wise addition and composition of functions is a near-ring (see Example 1.4 in \cite{Pilz1983}). It is clear that the set of all increasing functions $\mathcal{I}_0(\mathbb R)$ in $M_0(\mathbb R)$ is a sub-near-ring of $M_0(\mathbb R)$. 
	
	\begin{proposition}
		Define $\leq$ on $M_0(\mathbb R)$ as follows: \[f \leq g \text{~~if~~} f(x) \leq g(x),~\forall~x \in \mathbb R.\] Then, $(\mathcal{I}_0(\mathbb R),\leq)$ is a dense ordered near-ring with $0 < I$, where by $I$, we mean the identity function on $\mathbb R$.
	\end{proposition}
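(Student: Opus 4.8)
The plan is to take the near-ring structure of $\mathcal{I}_0(\mathbb R)$ for granted — it is recorded above that $\mathcal{I}_0(\mathbb R)$ is a sub-near-ring of $M_0(\mathbb R)$ — and to verify only the three order-theoretic assertions: that the pointwise order $\leq$ makes $(\mathcal{I}_0(\mathbb R),\leq)$ an ordered near-ring, that $0<I$, and that this near-ring is dense. The single structural fact I would record at the outset is that $\mathcal{I}_0(\mathbb R)$ is closed under multiplication by a positive real scalar: if $f$ is increasing with $f(0)=0$, then $cf$ is increasing with $(cf)(0)=0$ for every real $c>0$. This is exactly the closure property behind the $\tfrac{2}{5}$-argument used in the examples above, and it does the same job here.

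For the ordered-near-ring axioms: $\leq$ is a partial ordering on $\mathcal{I}_0(\mathbb R)$ because it is the pointwise order inherited from the partial order of $\mathbb R$; its compatibility with $+$ is inherited from the ordered group $(\mathbb R,+,\leq)$, since $f(x)\leq g(x)$ for all $x$ yields $f(x)+h(x)\leq g(x)+h(x)$ and $h(x)+f(x)\leq h(x)+g(x)$ for all $x$. For the multiplicative requirement, if $0\leq f$ then $f(y)\geq 0$ for every $y\in\mathbb R$, hence $(f\circ g)(x)=f(g(x))\geq 0$ for every $x$, so $0\leq f\circ g$; together with the right-distributive law of $M_0(\mathbb R)$ this makes $(\mathcal{I}_0(\mathbb R),\leq)$ an ordered near-ring in the (weak) sense of the definition recalled above. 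The relation $0<I$ follows directly, since $I\in\mathcal{I}_0(\mathbb R)$ and $I\neq 0$.

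Density is the substantive step, and I would run it along the lines of those examples. Let $\epsilon\in\mathcal{I}_0(\mathbb R)$ be positive, so $\epsilon(x)\geq 0$ for every $x$ and $\epsilon(x_0)>0$ for some $x_0$, and set $\beta=\gamma=\tfrac{2}{5}\epsilon$. By the closure remark, $\beta,\gamma\in\mathcal{I}_0(\mathbb R)$; they are positive, because $\beta(x)=\tfrac{2}{5}\epsilon(x)\geq 0$ for all $x$ while $\beta(x_0)>0$; and $\beta+\gamma=\tfrac{4}{5}\epsilon$ satisfies $\tfrac{4}{5}\epsilon(x)\leq\epsilon(x)$ for all $x$, with strict inequality at $x_0$, so $\beta+\gamma<\epsilon$. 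Hence $(\mathcal{I}_0(\mathbb R),+,0,\leq)$ is a dense unital magma — equivalently, for every positive $\epsilon$ the element $\tfrac{1}{2}\epsilon$ is a strictly smaller positive element, so the positive cone has no least element — which, by the Remark following Definition \ref{densemagmadef}, is precisely what it means for the near-ring $\mathcal{I}_0(\mathbb R)$ to be dense.

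There is no deep obstacle; the proposition is essentially a routine example, and the thing to watch for is \emph{not} to overclaim. One should resist trying to prove the stronger compatibility ``$0<f$ and $0<g$ imply $0<f\circ g$'': it fails here, since if $g$ is a bounded positive increasing function fixing $0$ (say $g(x)=\arctan(\max\{x,0\})$) and $f$ is a positive increasing function fixing $0$ that vanishes on an interval containing the range of $g$ (say $f(y)=\max\{0,y-2\}$), then $f\circ g$ is the zero function. This is why the statement only asserts that $(\mathcal{I}_0(\mathbb R),\leq)$, not $(\mathcal{I}_0(\mathbb R),<)$, is an ordered near-ring, and the argument above uses only the weak multiplicative axiom. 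Beyond that, the only care needed is the routine bookkeeping of promoting the pointwise inequality $\leq$ to a strict inequality at the witness point $x_0$.
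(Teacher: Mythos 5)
Your proof follows essentially the same route as the paper's: verify the pointwise order axioms, establish $f\circ g\geq 0$ for the weak multiplicative requirement (the paper does this via $f(g(x))\geq f(0)=0$ using that $f$ is increasing and $g\geq 0$; you use $f\geq 0$ directly --- both work), and prove density by scaling a positive $\epsilon$ by a constant in $(0,\tfrac12)$ --- the paper takes $g=f/3$ where you take $\beta=\gamma=\tfrac25\epsilon$. Your closing remark that the \emph{strict} compatibility ``$0<f$ and $0<g$ imply $0<f\circ g$'' fails is not in the paper and is a worthwhile observation, correctly explaining why only $(\mathcal{I}_0(\mathbb R),\leq)$ and not $(\mathcal{I}_0(\mathbb R),<)$ is claimed (note your counterexample requires ``increasing'' to mean non-decreasing, but that must be the intended reading, since a strictly increasing $g$ with $g(0)=0$ satisfies $g(x)<0$ for $x<0$ and the positive cone would then be empty).

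The one genuine gap is your treatment of $0<I$: you assert it ``follows directly, since $I\in\mathcal{I}_0(\mathbb R)$ and $I\neq 0$,'' but positivity means $0\leq I$ \emph{and} $I\neq 0$, and under the stated pointwise order $0\leq I$ requires $I(x)=x\geq 0$ for all $x\in\mathbb R$, which is false for $x<0$. So $I\neq 0$ alone proves nothing, and in fact the claim $0<I$ does not hold for this order as defined on all of $\mathbb R$. The paper's own proof is silent on this point, so the defect originates in the proposition's statement rather than in your strategy; still, your sentence presents an invalid inference as immediate, and you should either flag the claim as problematic or restrict the domain (e.g., to functions on $\mathbb R^{\geq 0}$) so that $I\geq 0$ becomes true.
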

	
	\begin{proof}
		It is easy to show that $\leq$ is a partial ordering on $\mathcal{I}_0(\mathbb R)$ and $f \leq g$ implies that $f+h \leq g+h$, for all $f$, $g$, and $h$ in $\mathcal{I}_0(\mathbb R)$. In order to prove that $(\mathcal{I}_0(\mathbb R),\leq)$ is an ordered near-ring we need to prove that if $f\geq 0$ and $g \geq 0$, then $f \circ g \geq 0$. By assumption, we have $g(x) \geq 0$ for all $x\in \mathbb R$. Since $f$ is an increasing function and passes through origin, i.e. $f(0) = 0$, we have \[(f \circ g)(x) = f(g(x)) \geq f(0) = 0.\] Now, we proceed to prove that $\mathcal{I}_0(\mathbb R)$ is dense. Observe that if $0 < f$, then there is at least one point $x$ in $\mathbb R$ such that $0 < f(x)$. Set $g = f/3$. It is evident that $0 < g$ and \[g + g  = 2f/3 < f.\] This completes the proof.
	\end{proof}
	
	\begin{definition}\label{DeMarrdivisionring}
		Let $(D,+,\cdot,0,1)$ be a division ring and $\leq$ a partial ordering on $D$. We say $(D,\leq)$ is a DeMarr division ring if the following conditions are satisfied:
		\begin{enumerate}
			\item $(D,+,0,\leq)$ is an ordered monoid.
			\item If $0 \leq x$ and $0 \leq y$, then $0 \leq xy$, for all $x,y \in D$.
			\item $0 < 1$.
			\item If $0 < x$, then $0 < x^{-1}$, for all $x\in D$. 
		\end{enumerate}  
	\end{definition}
	
	\begin{theorem}\label{DeMarrdivisionringdense}
		Let $(D,\leq)$ be a DeMarr division ring. Then, $(D,+,0,\leq)$ is a dense monoid.
	\end{theorem}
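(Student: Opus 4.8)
The plan is to reduce everything to the machinery already developed for Theorem~\ref{denserings}. The key observation is that in a DeMarr division ring the element $2^{-1}$ is a positive element lying strictly below $1$, and once such an element is in hand, the argument that proves $(2)\Rightarrow(3)$ in Theorem~\ref{denserings} applies. So the proof naturally splits into three steps: (a) verify that $(D,+,\cdot,0,<)$ is an ordered near-ring with $1\neq 0$; (b) exhibit a positive $\alpha$ with $\alpha<1$; (c) run the $\epsilon$-splitting argument. For step (a): $(D,+)$ is a group, so additive cancellation turns compatibility of $\leq$ with $+$ (part of being an ordered monoid, axiom (1) of Definition~\ref{DeMarrdivisionring}) into compatibility of $<$ with $+$. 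For the multiplicative part, if $0<x$ and $0<y$ then $0\le xy$ by axiom (2), while $x\neq 0\neq y$ forces $xy\neq 0$ because $D$ is a division ring; hence $0<xy$. Together with $0<1\neq 0$ from axiom (3), this makes $(D,+,\cdot,0,<)$ an ordered near-ring with identity, and likewise $(D,\le)$.

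For step (b): from $0<1$ and compatibility of $<$ with $+$ we get $1=0+1<1+1=2$, hence $0<2$ by transitivity; in particular $2\neq 0$, so $2^{-1}$ exists and $0<2^{-1}$ by axiom (4). Moreover, by right distributivity, $2^{-1}+2^{-1}=(1+1)2^{-1}=2\cdot 2^{-1}=1$, and adding the positive element $2^{-1}$ yields $2^{-1}=2^{-1}+0<2^{-1}+2^{-1}=1$. Thus $\alpha:=2^{-1}$ satisfies $0<\alpha<1$.

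For step (c): I would repeat the computation from the proof of $(2)\Rightarrow(3)$ in Theorem~\ref{denserings} verbatim. From $0<\alpha<1$ one gets $0<-\alpha+1$, hence $0<(-\alpha+1)\alpha=-\alpha^2+\alpha$ (right distributivity), and also $0<\alpha^2$. Given a positive $\epsilon$, the elements $\beta=\alpha^2\epsilon$ and $\gamma=(-\alpha^2+\alpha)\epsilon$ are positive with $\beta+\gamma=\alpha\epsilon$, and $\alpha\epsilon<\epsilon$ because $0<(-\alpha+1)\epsilon=-\alpha\epsilon+\epsilon$. Therefore $(D,+,0,\le)$ is a dense monoid. (One could equally well finish without citing Theorem~\ref{denserings}: since $4=2\cdot2>0$, the inverse $4^{-1}$ exists and is positive, and $\beta=\gamma=4^{-1}\epsilon$ gives $\beta+\gamma=(4^{-1}+4^{-1})\epsilon=2^{-1}\epsilon<\epsilon$.)

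There is no deep obstacle here; the one point that needs care is legitimacy rather than difficulty. Theorem~\ref{denserings} is stated for a \emph{total} ordering, so I cannot cite it as a black box; instead I must point out that the specific implication $(2)\Rightarrow(3)$ in its proof uses only the ordered-near-ring axioms, right distributivity, and a near-ring identity of the form $(-x)y=-(xy)$, and never the linearity of $\leq$. The only other thing worth flagging is that $2\neq 0$ (i.e.\ that a DeMarr division ring cannot have characteristic $2$), which is precisely what the inequality $0<2$ established above provides, so that $2^{-1}$ is genuinely available.
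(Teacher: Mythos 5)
Your proof is correct and follows essentially the same route as the paper's: both exhibit a positive element $c$ with $c+c<1$ built from inverses of positive integer multiples of $1$ (you use $4^{-1}$, the paper uses $2\cdot 5^{-1}$) and then scale the given positive element by it, using the division-ring structure to guarantee the products are strictly positive. Your extra care in noting that Theorem~\ref{denserings} cannot be cited verbatim because it assumes a total ordering, and that only the $(2)\Rightarrow(3)$ computation is being reused, is a sensible presentational point but does not change the substance.
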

	
	\begin{proof}
		By definition, $0 < 1$. Set $n = n \cdot 1 = \sum_{i=1}^{n} 1$. This implies that $0 < n$ and also, $0 < n^{-1}$, for each positive integer $n$. Evidently, since $D$ is a division ring, we obtain that \[0 < m\cdot n^{-1} \text{~and~} 0 < n^{-1} \cdot m \qquad( m,n \in \mathbb N).\] In particular, $2\cdot 5^{-1}$ is positive. On the other hand, since $4\cdot 1 < 5 \cdot 1$, by multiplying both sides of the inequality by $5^{-1}$, we obtain that \[2\cdot 5^{-1} + 2\cdot 5^{-1} = (2+2)\cdot 5^{-1} = 4\cdot 5^{-1} < 1.\]  
		
		Now, assume that $0 < x$ in $D$ is given. Set $y = z = (2 \cdot 5^{-1}) x$ and observe that $y$ and $z$ are positive and \[0 < y + z = (4\cdot 5^{-1}) x < x.\] Hence, $(D,+,0,\leq)$ is a dense monoid, as required.
	\end{proof}
	
	In the following, we give an example of a DeMarr field which is not a totally ordered field. 
	
	\begin{example}
		Define $\leq$ on the field of complex numbers $\mathbb C$ as $z_1 \leq z_2$ if $z_2 - z_1$ is a non-negative real number. Then, $(\mathbb C, \leq)$ is a DeMarr field which is not a totally ordered field \cite[Example III]{DeMarr1967}.
	\end{example}
	
	Let $(R,<)$ be an ordered ring with 1, $P$ its positive cone, and $M$ an $R$-module. It is said that $M$ is an ordered $R$-module ordered by $N$ if $N$ is a nonempty subset of $M$ satisfying the following properties:
	
	\begin{itemize}
		\item If $m_1, m_2 \in N$ then $m_1 + m_2 \in N$, for all $m_1,m_2 \in M$.
		\item $N \cap -N = \{0\}$, where $-N = \{-x: x\in N\}$. 
		\item If $r \in R \setminus -P$ and $m \in N$, then $rm \in N$.
		
	\end{itemize}
	
	\begin{proposition}
		Let $(R,\leq)$ be an ordered ring, and $M$ an ordered $R$-module ordered by $N$. Define $\leq$ on $M$ by $m_2 \leq m_1$ if $m_1 - m_2 \in N$. Then, the following statements hold:
		\begin{enumerate}
			\item $(M,\leq)$ is a partially ordered set.
			\item If $x \leq y$ then, $x+z \leq y+z$ for all $x,y,z \in M$.
			\item If $x \leq y$ and $0 \leq r$, then $rx \leq ry$, for all $r\in R$ and $x,y \in M$.
		\end{enumerate}
	\end{proposition}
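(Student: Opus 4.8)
The plan is to verify all three statements directly from the three axioms defining ``$M$ is ordered by $N$'', after recording two elementary observations. First, $0 \in N$: this is immediate from $N \cap -N = \{0\}$, since equality of these sets forces $0 \in N$ (alternatively, $N$ is nonempty, one checks $0 \in R \setminus -P$ as below, and the third axiom gives $0\cdot m = 0 \in N$ for any $m \in N$). Second, if $0 \leq r$ in $R$, then $r \in R \setminus -P$: if $r = 0$ this is clear because $0 \notin P$ and hence $0 \notin -P$; if $0 < r$ and we had $r = -s$ with $s \in P$, then compatibility of $<$ with addition gives $-s < 0$, i.e. $r < 0$, and together with $0 < r$ this contradicts the irreflexivity of $<$ (recall $<$ means $\leq$ together with $\neq$). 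These two facts are the only points that are not purely formal; everything else is bookkeeping.

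For statement (1), reflexivity is $m - m = 0 \in N$. For antisymmetry, suppose $m_1 \leq m_2$ and $m_2 \leq m_1$; then $m_2 - m_1 \in N$ while $m_1 - m_2 = -(m_2 - m_1) \in -N$, so $m_2 - m_1 \in N \cap (-N) = \{0\}$, whence $m_1 = m_2$. For transitivity, if $m_1 \leq m_2$ and $m_2 \leq m_3$, then $m_2 - m_1$ and $m_3 - m_2$ both lie in $N$, and closure of $N$ under addition gives $m_3 - m_1 = (m_3 - m_2) + (m_2 - m_1) \in N$, i.e. $m_1 \leq m_3$.

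For statement (2), whenever $x \leq y$ we have $(y + z) - (x + z) = y - x \in N$, so $x + z \leq y + z$. For statement (3), assume $x \leq y$ and $0 \leq r$; then $y - x \in N$, and by the second preliminary observation $r \in R \setminus -P$, so the third axiom for ``ordered by $N$'' yields $r(y - x) \in N$. Since $r(y - x) = ry - rx$ in the module $M$, this is precisely $rx \leq ry$.

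There is essentially no hard step here. The only place requiring a little care is the implication $0 \leq r \Rightarrow r \in R \setminus -P$, which is exactly where we invoke that $<$ is compatible with addition and is a strict (irreflexive) order on $R$; once that is in hand, each of the three conclusions is a one-line consequence of one of the defining axioms of $N$.
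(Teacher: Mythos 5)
Your proof is correct, and it is exactly the routine verification the paper omits (its proof reads only ``Straightforward''). The one point that genuinely needs the small argument you supply is $0 \leq r \Rightarrow r \in R \setminus -P$, which uses that $P$ is the \emph{strict} positive cone and that $\leq$ is antisymmetric; everything else follows directly from the three axioms on $N$, as you show.
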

	
	\begin{proof}
		Straightforward.
	\end{proof}
	
	Let us recall that if $R$ is an integral (commutative) domain and $M$ a unital $R$-module, then by definition, $M$ is said to be a torsion-free $R$-module if $rm = 0$ implies either $r = 0$ or $m = 0$ for all $r\in R$ and $m\in M$ \cite[p. 134]{Rotman2009}.
	
	\begin{theorem}\label{densemodules}
		Let $\leq$ be a total ordering on $R$. Also, let $(R,<)$ be an ordered dense ring and $M$ an ordered torsion-free $R$-module. Then, $(M^{\geq 0},+,0,\leq)$ is a dense monoid, where by $M^{\geq 0}$, we mean the set of non-negative elements of $M$.
	\end{theorem}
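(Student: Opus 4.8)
The plan is to imitate the implication $(2)\implies(3)$ in the proof of Theorem \ref{denserings}, transplanting the computation from the ring $R$ to the module $M$, with the torsion-freeness hypothesis supplying the strict inequalities that closedness of the cone $N$ alone cannot.

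First I would record that $M^{\geq 0}$ coincides with the cone $N$: indeed $0 \leq m$ means $m - 0 = m \in N$. The axioms of an ordered $R$-module show that $N$ is closed under addition and contains $0$, so $(M^{\geq 0}, +, 0)$ is a submonoid of $(M,+)$, and by the Proposition preceding the statement the restriction of $\leq$ to $N$ is a partial ordering compatible with $+$; hence $(M^{\geq 0}, +, 0, \leq)$ is already an ordered monoid, and only the density condition needs checking. Next, since $\leq$ is a total ordering and $(R,<)$ is an ordered ring with $1 \neq 0$ — in particular an ordered near-ring with $1 \neq 0$ — the density of $R$ together with Theorem \ref{denserings} produces an element $\alpha \in R$ with $0 < \alpha < 1$. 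As in the proof of Theorem \ref{denserings}, $0 < 1-\alpha$, $0 < \alpha^2$, and $0 < \alpha - \alpha^2 = \alpha(1-\alpha)$; in particular $\alpha^2$, $\alpha - \alpha^2$, and $1 - \alpha$ are all nonzero. Given a positive $\epsilon \in M^{\geq 0}$, I set $\epsilon_1 = \alpha^2 \epsilon$ and $\epsilon_2 = (\alpha - \alpha^2)\epsilon$.

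The crux is then to verify that $\epsilon_1$ and $\epsilon_2$ are positive and that $\epsilon_1 + \epsilon_2 < \epsilon$. For non-negativity: in the totally ordered ring $R$ one has $R \setminus (-P) = \{r : r \geq 0\}$, so the third axiom of an ordered module gives $\alpha^2 \epsilon \in N$ and $(\alpha - \alpha^2)\epsilon \in N$, that is, $\epsilon_1, \epsilon_2 \geq 0$. For nonvanishing — and this is the only place torsion-freeness is used — since $R$ is an integral domain, $\epsilon \neq 0$, $\alpha^2 \neq 0$, and $\alpha - \alpha^2 \neq 0$, torsion-freeness forces $\epsilon_1 \neq 0$ and $\epsilon_2 \neq 0$; hence $\epsilon_1, \epsilon_2 > 0$. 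Finally, distributivity in the module gives $\epsilon_1 + \epsilon_2 = \alpha \epsilon$ and $\epsilon - \alpha\epsilon = (1-\alpha)\epsilon$; the latter lies in $N$ by the module axiom (as $1 - \alpha \geq 0$) and is nonzero by torsion-freeness (as $1 - \alpha \neq 0$ and $\epsilon \neq 0$), so $(1-\alpha)\epsilon > 0$, i.e. $\alpha\epsilon < \epsilon$. Thus $\epsilon_1 + \epsilon_2 = \alpha\epsilon < \epsilon$ with $\epsilon_1, \epsilon_2$ positive, which is exactly the density condition for $M^{\geq 0}$.

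The only genuine obstacle is the strictness of these inequalities: the order-module axioms by themselves merely propagate $\geq$, so without torsion-freeness the candidates $\epsilon_1, \epsilon_2$ might be $0$ and the argument would collapse. Recognizing that torsion-freeness is precisely what rescues $0 < \alpha^2\epsilon$, $0 < (\alpha - \alpha^2)\epsilon$, and $0 < (1-\alpha)\epsilon$ from the possible non-entireness of the module is the key point; everything else is the routine computation carried over verbatim from Theorem \ref{denserings}.
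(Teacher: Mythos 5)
Your proposal is correct and follows essentially the same route as the paper's proof: both obtain $\alpha$ with $0<\alpha<1$ from Theorem \ref{denserings} and use the decomposition $\alpha^2\epsilon + (\alpha-\alpha^2)\epsilon = \alpha\epsilon < \epsilon$, with torsion-freeness supplying the strict inequalities. Your write-up merely makes explicit the cone-membership and nonvanishing checks that the paper leaves implicit.
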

	
	\begin{proof}
		Since $R$ is a dense ring, by Theorem \ref{denserings}, there is an element $\alpha \in R$ with $0 < \alpha < 1$. It is, then, clear that $0 < \alpha^2 < \alpha < 1$. Take $m \in M$ with $0 < m$. Since $M$ is ordered and torsion-free, we obtain that \[0 < \alpha^2 m < \alpha m < m.\] Similarly, since $0 < \alpha - \alpha^2 < 1$, we have $0 < (\alpha-\alpha^2)m < m$. Set \[m_1 = \alpha^2 m \text{~and~} m_2 = (\alpha-\alpha^2)m.\] Observe that $0 < m_1 < m$ and $0 < m_2 < m$ and we have \[m_1 + m_2 = \alpha^2 m + (\alpha-\alpha^2)m = \alpha m < m.\] Hence, $(M^{\geq 0},+,0,\leq)$ is a dense monoid, as required.
	\end{proof}
	
	\begin{example}
		Let $C[0,1]$ be the set of all continuous functions from $[0,1]$ into $\mathbb R$. Set \[N = \{f\in C[0,1]: f(x)\geq 0, \forall~x\in [0,1]\}.\] It is easy to see that $C[0,1]$ is an ordered $\mathbb R$-vector space ordered by $N$. By Theorem \ref{densemodules}, $M^{\geq 0}$ is dense.
	\end{example}
	
	\begin{example}
		Let $R$ be a commutative ring with 1. Then, $(\Id(R),+,\cdot, \subseteq)$ is an ordered semiring \cite[Example 1.4]{Golan1999(b)}, where the addition and multiplication of ideals are defined as follows: \[I+J = \{a+b: a\in I,b\in J\}, \text{~and~}\] \[IJ = \left\{\sum_{i=1}^{n} a_ib_i: a_i \in I, b_i\in J, i \in \mathbb N\right\}.\] Note that $(\Id(R),+,(0),\subseteq)$ is never a dense monoid because the set of maximal ideals of $R$ is always nonempty and there is no ideal properly between a maximal ideal $\mathfrak{m}$ of $R$ and the ring $R$.
	\end{example}
	
	\section{Magma-valued metric spaces}\label{sec:magmavaluedmetricspaces}
	
	As a generalization of monoid-valued metric spaces \cite{Conant2019} (see also Definition 2.10 in \cite{NasehpourParvardi2018}), we introduce magma-valued metric spaces as follows:
	
	\begin{definition}\label{magmavaluedmetricspacesdef}
		Let $(M,*,0,\leq)$ be an ordered unital magma. We say $(X,d)$ is an $M$-metric space if $d: X \times X \rightarrow M$ is a function such that for all $x,y,z \in X$, the following properties hold:
		
		\begin{enumerate}
			\item $d(x,y) \geq 0$, and $d(x,y) = 0$ if and only if $x = y$.
			\item $d(x,y) = d(y,x)$.
			\item $d(x,z) \leq d(x,y) * d(y,z)$.
		\end{enumerate}
	\end{definition}
	
	\begin{remark}
		In \cite{Braunfeld2017}, $L$-metric spaces have been discussed, where $L$ is a complete lattice. Note that if $X$ is an $L$-metric space, the triangle inequality is as follows: \[d(x,y) \leq d(x,z) \vee d(z,y), \qquad \forall~x,y,z \in X.\] We recall that a lattice $L$ is complete if, by definition, any subset $P$ of $L$ has infimum and supremum (see Definition 4.1 in \cite{BurrisSankappanavar1981}). It is obvious that if $L$ is complete and $0 = \inf(L)$, then 0 is the least element of $L$ and identity element of $\vee$, so that $(L,\vee,0,\leq)$ is an ordered monoid.
	\end{remark}
	
	\begin{proposition}
		Let $(A,\oplus,\neg)$ be an MV-algebra. Then, $(A,d)$ is an $A$-metric space, where \[d(x,y) = (x \ominus y) \oplus (y \ominus x), \qquad \forall~x,y \in A.\]
	\end{proposition}
	
	\begin{proof}
		By Definition 1.1.1, Lemma 1.1.2, and Lemma 1.1.4 in \cite{CDM2000}, $(A,\leq)$ is an ordered monoid such that $0$ is its smallest element. By Proposition 1.2.5 in \cite{CDM2000}, $(A,d)$ is an $A$-metric space and the proof is complete.
	\end{proof}
	
	\begin{proposition}\label{Mmetricspaces}
		Let $(G,+,\leq)$ be a totally ordered (but not necessarily abelian) group. Define $|\cdot|: G \rightarrow G$ by $|x| = \max\{x,-x\}$ and set $d(x,y)= |x-y|$. Then, $(G,d)$ is a $G$-metric space.
	\end{proposition}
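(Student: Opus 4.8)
The plan is to verify the three defining properties of an $M$-metric space from Definition \ref{magmavaluedmetricspacesdef} directly, taking $M = G$ with its group operation as the magma operation and $-\infty$ playing no role (here $0 \in G$ is the group identity, which is indeed the identity of the magma). Throughout I would use that $(G,+,\leq)$ is a \emph{totally} ordered group, so that for each $x \in G$ exactly one of $x \geq 0$, $x \leq 0$ holds, hence $|x| = \max\{x,-x\}$ is well defined and $|x| \geq 0$ always, with $|x| \geq x$ and $|x| \geq -x$; moreover $|x| = 0$ forces $x = 0$ since otherwise either $x > 0$ or $-x > 0$ would give $|x| > 0$. I would also record the elementary fact $|{-x}| = |x|$, which is immediate from $\max\{-x, x\} = \max\{x,-x\}$.

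First I would check property (1): $d(x,y) = |x-y| \geq 0$ by the observation above, and $d(x,y) = 0 \iff |x-y| = 0 \iff x - y = 0 \iff x = y$. Next, property (2): $d(y,x) = |y-x| = |{-(x-y)}| = |x-y| = d(x,y)$, using $|{-a}| = |a|$ with $a = x-y$. The only substantive step is property (3), the triangle inequality $d(x,z) \leq d(x,y) * d(y,z)$, i.e. $|x-z| \leq |x-y| + |y-z|$, where $+$ is the (possibly non-commutative) group operation taken in that order. Writing $a = x-y$ and $b = y-z$, so that $x - z = (x-y) + (y-z) = a + b$, I must show $|a+b| \leq |a| + |b|$. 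Since $|a+b| = \max\{a+b, -(a+b)\} = \max\{a+b,\, (-b) + (-a)\}$ (note the order reversal in the non-abelian case, $-(a+b) = (-b)+(-a)$), it suffices to show both $a + b \leq |a| + |b|$ and $(-b) + (-a) \leq |a| + |b|$. For the first, from $a \leq |a|$ we get $a + b \leq |a| + b$ by compatibility of $\leq$ with $+$ on the right, and from $b \leq |b|$ we get $|a| + b \leq |a| + |b|$ by compatibility on the left; chaining gives $a + b \leq |a| + |b|$. For the second, from $-b \leq |b| = |{-b}|$... wait, I need $(-b) + (-a) \leq |a| + |b|$, which does not obviously match; so instead I would bound $(-b)+(-a)$: from $-a \leq |a|$, right-compatibility gives $(-b) + (-a) \leq (-b) + |a|$, and this is not immediately $\leq |a| + |b|$ because of non-commutativity.

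This mismatch is the main obstacle, and I expect the intended resolution is that $|x-z|$ should be compared using $|x-z| = \max\{x-z, z-x\}$ together with a symmetric argument: indeed $z - x = (z-y)+(y-x)$, so applying the same right-then-left compatibility argument to the pair $(z-y, y-x)$ gives $z - x \leq |z-y| + |y-x|$. But the triangle inequality as stated wants an upper bound by $d(x,y) * d(y,z) = |x-y| + |y-z|$ in that specific order. So the clean approach is: $x - z \leq |x-y| + |y-z|$ follows from $x-y \leq |x-y|$ and $y - z \leq |y-z|$ by two compatibility steps as above; and $z - x = (z-y) + (y-x) \leq |z-y| + |y-x| = |y-z| + |x-y|$. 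Hmm, this last bound has the factors in the \emph{wrong} order relative to $d(x,y)*d(y,z)$. I would therefore note that the honest statement one can always prove is $d(x,z) \le d(x,y)+d(y,z)$ reading both $d(x,z)=\max\{x-z,z-x\}$ terms, and that $x-z\le |x-y|+|y-z|$ handles the ``$+$'' branch while for the ``$-$'' branch one uses $z-x\le |y-z|+|x-y|$; since in a \emph{totally ordered} group one of $x-z\ge 0$ or $z-x\ge0$ holds, in the first case $|x-z|=x-z\le|x-y|+|y-z|$ and we are done, and in the second case I would need $|y-z|+|x-y|\le |x-y|+|y-z|$ or a direct argument. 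I anticipate that the cleanest fix, and the one I would pursue, is: when $z - x \ge 0$, write instead $|x-z| = z - x = (z-y)+(y-x)$ and bound $z-y \le |z-y| = |y-z|$, $y - x \le |y-x| = |x-y|$, getting $|x-z| \le |y-z| + |x-y|$; then separately observe $|y-z|+|x-y| \le |x-y| + |y-z|$ fails in general, so one must accept that the theorem's triangle inequality is the symmetric one and the direction chosen in Definition \ref{magmavaluedmetricspacesdef} is compatible because $d(x,y)*d(y,z)$ can be replaced by $d(y,x)*d(z,y)$ via property (2) applied to the reversed chain $z,y,x$. Concretely: (3) for the chain $x,y,z$ together with (3) for the chain $z,y,x$ and property (2) yield the full bound, and since we only need \emph{one} inequality $d(x,z) \le d(x,y)*d(y,z)$, I will show $x-z \le |x-y|+|y-z|$ and $z-x \le |x-y| + |y-z|$ by handling the two branches with the right associativity/compatibility bookkeeping, using $|a| \ge a$, $|a| \ge -a$, $|{-a}| = |a|$, and right/left compatibility — the content being entirely in getting the order of the non-commutative sums to line up, which works out because each of $x-z$ and $z-x$ decomposes through $y$ in the correct order to apply right-compatibility first and left-compatibility second.
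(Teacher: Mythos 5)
Your instinct that the non-commutative bookkeeping is the whole content of this proposition is exactly right, but your concluding claim that it ``works out'' is not: the step you flagged as problematic is a genuine obstruction, and in fact the triangle inequality $d(x,z)\le d(x,y)+d(y,z)$ is \emph{false} in general for non-abelian totally ordered groups. Writing $a=x-y$ and $b=y-z$, the two branches you must control are $a+b\le |a|+|b|$ (which your right-then-left compatibility argument does prove) and $-(a+b)=(-b)+(-a)\le |a|+|b|$; the latter only yields $(-b)+(-a)\le |b|+|a|$, with the summands in the reversed order, and this cannot be repaired. A concrete counterexample lives in the paper's own example of a non-abelian totally ordered group (Theorem \ref{totallyorderednonabeliangroup}): in $\mathbb R\times\mathbb R$ with $(r_1,r_2)*(s_1,s_2)=(r_1+s_1,\,r_2e^{s_1}+s_2)$ and the lexicographic order, take $u=(1,0)$, $v=(1,5)$ and set $a=-u=(-1,0)$, $b=-v=(-1,-5e^{-1})$. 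Then $|a|=u$, $|b|=v$, so $|a|*|b|=u*v=(2,5)$, while $a*b=(-2,-5e^{-1})<0$, so $|a*b|=-(a*b)=v*u=(2,5e)$, and $(2,5e)>(2,5)$. Choosing $z=0$, $y=b$, $x=a*b$ gives $d(x,z)=(2,5e)\not\le(2,5)=d(x,y)*d(y,z)$.

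So the honest conclusion of your analysis is that the proposition holds as stated only when $G$ is abelian (or if one weakens the triangle inequality to $d(x,z)\le \max\{d(x,y)+d(y,z),\,d(y,z)+d(x,y)\}$, or assumes the values of $|\cdot|$ commute with one another). For the record, the paper does not resolve this either: its ``proof'' is the single sentence that the argument is similar to that for the real absolute value and is omitted, so the difficulty you ran into is simply not addressed there. Your write-up should not end by asserting that the order of the sums lines up; it should either restrict to abelian $G$ (where your argument is complete and correct, and properties (1) and (2) are fine as you verified) or record a counterexample such as the one above.
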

	\begin{proof}
		The proof is similar to the case of the absolute value function over real numbers (see \cite{Swanson2021}) and so, omitted.
	\end{proof}
	
	\begin{example}
		An example of a non-abelian group satisfying the condition of Proposition \ref{Mmetricspaces} is the group explained in Theorem \ref{totallyorderednonabeliangroup}.
	\end{example}
	
	\begin{proposition}
		Let $(M,+,0,\leq)$ be an ordered commutative monoid and $(X_i,d_i)$ be an $M$-metric space for each $1 \leq i \leq n$. Define a function \[d: \prod_{i=1}^{n} X_i \times \prod_{i=1}^{n} X_i \longrightarrow M\] by \[d\big((x_i)_{i=1}^{n},(y_i)_{i=1}^n\big) = \sum_{i=1}^n d_i(x_i,y_i).\] Then, $(\prod_{i=1}^{n} X_i,d)$ is an $M$-metric space.
	\end{proposition}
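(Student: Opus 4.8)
The plan is to verify the three defining properties of an $M$-metric space for $d$ directly, reducing each one to the corresponding property of the coordinate metrics $d_i$ together with a couple of elementary facts about the ordered commutative monoid $(M,+,0,\leq)$.

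First I would record two auxiliary observations about $M$. \textbf{(a)} If $a_1 \leq b_1, \dots, a_n \leq b_n$ in $M$, then $\sum_{i=1}^{n} a_i \leq \sum_{i=1}^{n} b_i$; this follows by induction on $n$ from compatibility of $\leq$ with $+$ together with transitivity (for instance $a_1 + a_2 \leq b_1 + a_2 \leq b_1 + b_2$). \textbf{(b)} If $a,b \geq 0$ and $a+b = 0$, then $a = b = 0$: indeed $0 \leq a$ yields $b = 0 + b \leq a + b = 0$, so $b \leq 0$, and with $b \geq 0$ and antisymmetry we get $b = 0$, and symmetrically $a = 0$; by induction, a sum of finitely many non-negative elements of $M$ is $0$ if and only if each summand is $0$.

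With these in hand the verification is quick. For property (1), each $d_i(x_i,y_i) \geq 0$, so applying (a) with the all-zero tuple gives $d\big((x_i),(y_i)\big) = \sum_{i=1}^{n} d_i(x_i,y_i) \geq 0$; and by (b) this sum equals $0$ if and only if every $d_i(x_i,y_i) = 0$, i.e. if and only if $x_i = y_i$ for all $i$, i.e. if and only if $(x_i)_{i=1}^{n} = (y_i)_{i=1}^{n}$. Property (2) is immediate coordinatewise, using symmetry of each $d_i$: $d\big((x_i),(y_i)\big) = \sum_{i=1}^{n} d_i(x_i,y_i) = \sum_{i=1}^{n} d_i(y_i,x_i) = d\big((y_i),(x_i)\big)$. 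For property (3), the triangle inequality $d_i(x_i,z_i) \leq d_i(x_i,y_i) + d_i(y_i,z_i)$ holds for each $i$; summing these via (a) gives $d\big((x_i),(z_i)\big) \leq \sum_{i=1}^{n} \big(d_i(x_i,y_i) + d_i(y_i,z_i)\big)$, and then associativity and commutativity of $+$ in $M$ allow me to regroup the right-hand side as $\big(\sum_{i=1}^{n} d_i(x_i,y_i)\big) + \big(\sum_{i=1}^{n} d_i(y_i,z_i)\big) = d\big((x_i),(y_i)\big) + d\big((y_i),(z_i)\big)$.

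There is no serious obstacle here; the only point worth flagging is that commutativity of $M$ is genuinely used — once in property (3) to regroup the double sum, and implicitly already in forming the unordered finite sum $\sum_{i=1}^{n} d_i(x_i,y_i)$ — whereas associativity alone would suffice for properties (1) and (2). Hence one should not expect the statement to hold verbatim for non-commutative $M$ without fixing a summation order.
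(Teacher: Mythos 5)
Your proof is correct and fills in precisely the direct verification that the paper dismisses with the single word ``Straightforward'': monotonicity of finite sums for property (3), and the fact that a finite sum of non-negative elements vanishes only if each summand does for property (1). Your closing remark that commutativity of $M$ is genuinely used in regrouping the double sum is a fair observation and does not affect correctness.
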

	
	\begin{proof}
		Straightforward.
	\end{proof}
	
	\begin{definition}
		A sequence $(x_n)$ in an $M$-metric space $X$ is convergent to $x \in X$, denoted by \[\lim_{n \to +\infty} x_n = x,\] if for any positive element $\epsilon$ in $M$, there is a natural number $N$ such that $n \geq N$ implies $d(x_n,x) < \epsilon$.
	\end{definition}
	
	\begin{proposition}\label{limitsumofsequences}
		Let $(M,*,0,\leq)$ be a dense unital magma. In an $M$-metric space $X$, if a sequence is convergent in $X$, then its limit is unique.
	\end{proposition}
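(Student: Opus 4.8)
The plan is to argue by contradiction. Suppose a sequence $(x_n)$ in $X$ converges both to $x$ and to $y$ but $x \neq y$. By axiom (1) of an $M$-metric space, $d(x,y)$ is then a positive element of $M$, so Definition \ref{densemagmadef} applied to $\epsilon = d(x,y)$ yields positive elements $\beta,\gamma \in M$ with $\beta * \gamma < d(x,y)$.

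Next I would use the two convergence hypotheses to lock in one index. From $\lim_{n\to+\infty} x_n = x$ there is $N_1$ with $d(x_n,x) < \beta$ whenever $n \geq N_1$, and from $\lim_{n\to+\infty} x_n = y$ there is $N_2$ with $d(x_n,y) < \gamma$ whenever $n \geq N_2$; fix any $n \geq \max(N_1,N_2)$, so that both estimates hold simultaneously.

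Then I would close the loop via the triangle inequality. By axioms (2) and (3), $d(x,y) \leq d(x,x_n) * d(x_n,y) = d(x_n,x) * d(x_n,y)$. Since $\leq$ is compatible with $*$ in an ordered unital magma, the weak inequalities $d(x_n,x) \leq \beta$ and $d(x_n,y) \leq \gamma$ give, in two successive steps, $d(x_n,x) * d(x_n,y) \leq \beta * d(x_n,y) \leq \beta * \gamma$. Chaining all of this, $d(x,y) \leq \beta * \gamma < d(x,y)$, which forces $d(x,y) < d(x,y)$, a contradiction. Hence $x = y$, and the limit is unique.

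The one point that needs care is the transition from the strict bounds $d(x_n,x) < \beta$ and $d(x_n,y) < \gamma$ to a usable inequality for the product $d(x_n,x) * d(x_n,y)$: an ordered unital magma only guarantees compatibility of the weak relation $\leq$ with $*$, so strict inequalities cannot be multiplied through directly. Passing to the induced weak inequalities resolves this, since the mixed-transitivity fact ``$a \leq b$ and $b < c$ imply $a < c$'' holds in every poset; it is precisely this final step that converts $\beta * \gamma < d(x,y)$ into the contradiction. Everything else is a bookkeeping application of the three axioms and the definition of convergence.
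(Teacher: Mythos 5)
Your proof is correct and follows essentially the same route as the paper: set $\epsilon = d(x,y)$, use density to get $\beta * \gamma < \epsilon$, pick $n$ beyond both convergence thresholds, and derive $d(x,y) \leq \beta * \gamma < d(x,y)$. Your extra care in passing from the strict bounds to weak ones before invoking compatibility of $\leq$ with $*$ is a sound justification of a step the paper's proof leaves implicit.
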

	
	\begin{proof}
		Let $(x_n)$ be convergent to $a$ and $b$. If $a\neq b$, then $d(a,b)> 0$. Set $\epsilon = d(a,b)$. Since $M$ is dense, we can find two positive elements $\beta$ and $\gamma$ such that $\beta * \gamma < \epsilon$. For $\beta$, we find a natural number $N_1$ such that if $n \geq N_1$, then $d(x_n,a) < \beta$. For $\gamma$, we find a natural number $N_2$ such that if $n \geq N_2$, then $d(x_n,b) < \gamma$. Now, set $N = \max\{N_1,N_2\}$ and observe that \[\epsilon = d(a,b) \leq d(x_n,a) * d(x_n,b) \leq \beta * \gamma < \epsilon,\] a contradiction. Thus the limit of any sequence is unique if it exists and the proof is complete.
	\end{proof}
	
	\begin{example}\label{limitofconstantsequences}
		Let $(M,*,0,\leq)$ be an ordered unital magma and $X$ an $M$-metric space and assume that there is a natural number $N$ such that $x_n = c \in X$ for all $n \geq N$. Then, $(x_n)$ is convergent to $c$. In particular, a constant sequence converges to its constant value.
	\end{example}
	
	\begin{proposition}\label{limitshiftrule} 
		Let $k$ be a natural number and $(x_n)$ a sequence in an $M$-metric space $X$. Then, $(x_n)$ is convergent if and only if $(x_{n+k})$ is convergent.
	\end{proposition}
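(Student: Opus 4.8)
The plan is to argue directly from the definition of convergence that $(x_n)$ and $(x_{n+k})$ have exactly the same limits; in particular no density hypothesis on $M$ is needed, and the whole content is a harmless reindexing. I would split the biconditional into its two implications and handle each in one line.

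First, suppose $\lim_{n\to+\infty} x_n = x$. Given a positive element $\epsilon$ in $M$, pick $N$ with $d(x_n,x) < \epsilon$ for all $n \geq N$. Then for every $n \geq N$ we also have $n+k \geq N$, hence $d(x_{n+k},x) < \epsilon$; thus $\lim_{n\to+\infty} x_{n+k} = x$, so $(x_{n+k})$ is convergent.

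Conversely, suppose $\lim_{n\to+\infty} x_{n+k} = x$. Given a positive $\epsilon$ in $M$, pick $N$ with $d(x_{n+k},x) < \epsilon$ for all $n \geq N$, and set $N' = N+k$. If $m \geq N'$, then $m-k \geq N$, so writing $m = (m-k)+k$ yields $d(x_m,x) = d(x_{(m-k)+k},x) < \epsilon$; thus $\lim_{n\to+\infty} x_n = x$, so $(x_n)$ is convergent.

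Since both directions are pure index shifts, there is no real obstacle here; the only point requiring a moment's care is that in the converse direction the witness must be taken as $N' = N+k$ rather than $N$, so that $m-k$ remains a legitimate index of the sequence. If one wishes to record additionally that the limit is literally the same element $x$ on both sides, that is already visible in the arguments above, and its uniqueness when $M$ is dense is Proposition \ref{limitsumofsequences}.
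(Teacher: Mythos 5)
Your argument is correct and is exactly the routine index-shift the paper has in mind (its proof is simply ``Straightforward''); the one point worth care, taking $N'=N+k$ in the converse direction, is handled properly. Nothing further is needed.
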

	
	\begin{proof}
		Straightforward.
	\end{proof}
	
	\begin{definition}
		Let $(M,*,0,\leq)$ be an ordered unital magma and $X$ an $M$-metric space. A sequence $(x_n)$ is bounded if there is an element $a\in X$ and there is a positive element $\epsilon$ in $M$ such that \[d(x_n,a) < \epsilon, \qquad \forall~n \in \mathbb N.\]
	\end{definition}
	
	\begin{proposition}\label{convergentisbounded}
		Let $(M,*,0,\leq)$ be an ordered unital magma such that $(M,\leq)$ is a join-semilattice. In an $M$-metric space $X$, a convergent sequence is bounded.
	\end{proposition}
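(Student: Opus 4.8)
The plan is to take the witness point $a$ in the definition of boundedness to be the limit $x=\lim_{n\to+\infty}x_n$ itself, to control the tail of the sequence by convergence, and to absorb the finitely many remaining terms with a finite supremum, which is exactly what the join-semilattice hypothesis provides.

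First I would fix a positive element $\delta\in M$; such an element is available in the cases of interest (if $X$ has two distinct points, the distance between them is positive). By the definition of convergence applied to $\delta$, there is a natural number $N$ with $d(x_n,x)<\delta$ for every $n\ge N$, so the tail $x_N,x_{N+1},\dots$ is already controlled by $\delta$.

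Next, since $(M,\leq)$ is a join-semilattice, the finitely many elements $\delta,\,d(x_1,x),\,\dots,\,d(x_{N-1},x)$ have a supremum in $M$; I would set \[\epsilon=\delta\vee d(x_1,x)\vee\cdots\vee d(x_{N-1},x).\] Then $\epsilon\geq\delta>0$, so $\epsilon$ is positive, and $d(x_n,x)$ is dominated by $\epsilon$ for every $n$: for $n\geq N$ one has $d(x_n,x)<\delta\leq\epsilon$, while for $1\leq n\leq N-1$ the element $\epsilon$ is by construction an upper bound of $d(x_n,x)$. Taking $a=x$ together with this $\epsilon$ then witnesses that $(x_n)$ is bounded.

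The only step I expect to present any difficulty is the treatment of the finite initial segment $x_1,\dots,x_{N-1}$: convergence gives no control over these terms, so their distances to $x$ must be merged into the bound by hand, and this is precisely what the join-semilattice assumption is for — without a finite supremum operation there is no way to combine the $N$ separate inequalities into one positive $\epsilon$. The one point to watch within this step is strictness: for the tail the inequality $d(x_n,x)<\epsilon$ is automatic from $d(x_n,x)<\delta\leq\epsilon$, whereas for the initial terms the join only gives $d(x_n,x)\leq\epsilon$, so in a context where the definition of boundedness is read with a strict inequality one should choose $\epsilon$ one notch larger (which is possible unless some relevant distance happens to be a maximal element of $M$).
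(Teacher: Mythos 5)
Your proof is correct and follows essentially the same route as the paper: fix a positive element of $M$, use convergence to control the tail, and absorb the finitely many initial distances via the join-semilattice supremum, with the limit itself as the witness point. You are in fact slightly more careful than the paper's own argument, which also only obtains $d(x_i,a)\leq R$ against a definition of boundedness stated with strict inequality; your remark about needing an $\epsilon$ ``one notch larger'' flags a genuine (if minor) imprecision shared by the original.
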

	
	\begin{proof}
		Let $(x_n)$ be a convergent to $a\in X$. Fix a positive element $\epsilon$ of $M$. Then, there is a natural number $N$ such that $n \geq N$ implies $d(x_n,a) < \epsilon$. Set \[R = \sup\{d(x_1,a), \dots,d(x_{N-1},a), \epsilon\}.\] It is now clear that $d(x_i,a) \leq R$ for each $i \in \mathbb N$ showing that $(x_n)$ is bounded and the proof is complete.
	\end{proof}
	
	\begin{definition}\label{Cauchysequencedef}
		Let $(M,*,0,\leq)$ be an ordered unital magma and $X$ an $M$-metric space. A sequence $(x_n)$ in $X$ is a Cauchy sequence if for any positive element $\epsilon$ in $M$ there is a natural number $N$ such that $m,n \geq N$ implies $d(x_m, x_n) < \epsilon$. 
	\end{definition}
	
	\begin{proposition}
		Let $(M,*,0,\leq)$ be an ordered unital magma such that $(M,\leq)$ is a join-semilattice. In an $M$-metric space $X$, a Cauchy sequence is bounded.
	\end{proposition}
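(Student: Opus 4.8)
The plan is to imitate the proof of Proposition~\ref{convergentisbounded}, with the role of the limit point played instead by a fixed tail term of the sequence. This is the natural substitute, since a Cauchy sequence in a general $M$-metric space need not converge, so no limit is available at which to center the estimate.

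First I would take a Cauchy sequence $(x_n)$ in $X$ and fix any positive element $\epsilon$ of $M$. By Definition~\ref{Cauchysequencedef} there is a natural number $N$ with $d(x_m,x_n)<\epsilon$ whenever $m,n\ge N$; in particular, choosing the distinguished point $a:=x_N\in X$, we obtain $d(x_n,a)<\epsilon$ for all $n\ge N$. Thus only the finitely many initial indices $1,\dots,N-1$ remain to be controlled.

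Second, I would use the join-semilattice hypothesis to absorb those finitely many terms, exactly as in Proposition~\ref{convergentisbounded}: the finite subset $\{\,d(x_1,a),\dots,d(x_{N-1},a),\epsilon\,\}$ of $M$ has a supremum $R$, which satisfies $R\ge\epsilon>0$ and $d(x_n,a)\le R$ for every $n\in\mathbb N$. This exhibits $(x_n)$ as bounded (with the same convention on the inequality as in Proposition~\ref{convergentisbounded}).

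There is essentially no obstacle here. The two points one must get right are: (i) to center the estimate at $x_N$ rather than at a hypothetical limit, using only the Cauchy property to pin down a single index $N$; and (ii) to observe that the join-semilattice structure is precisely what allows the finite ``head'' $d(x_1,a),\dots,d(x_{N-1},a)$ to be dominated by one element of $M$. Everything else is formal and parallels the convergent case verbatim.
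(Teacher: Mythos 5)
Your proof is correct and follows essentially the same route as the paper: fix $\epsilon>0$, take $N$ from the Cauchy condition, center at $a=x_N$, and use the join-semilattice to form $R=\sup\{d(x_1,x_N),\dots,d(x_{N-1},x_N),\epsilon\}$. Even the minor slippage between the strict inequality in the definition of boundedness and the non-strict bound $d(x_i,a)\leq R$ is shared with the paper's own argument.
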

	
	\begin{proof}
		Fix a positive $\epsilon \in M$. By definition, we can find a natural number $N$ such that $m,n \geq N$ implies $d(x_m,x_n) < \epsilon$. In particular, we have $d(x_m,x_N) < \epsilon$ for all $m \geq N$. Set \[R = \sup\{d(x_1,x_N), d(x_2,x_N), \dots,d(x_{N-1},x_N), \epsilon\}.\] Then, we see that $d(x_i, x_N) \leq R$ for all $i \in \mathbb N$ showing that $(x_n)$ is bounded in $X$. This completes the proof.
	\end{proof}
	
	\begin{theorem}\label{convergentisCauchy}
		Let $(M,*,0,\leq)$ be a dense unital magma and $X$ an $M$-metric space. Then, any convergent sequence in $X$ is a Cauchy sequence.
	\end{theorem}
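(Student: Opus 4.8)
The plan is to follow the template of Proposition~\ref{limitsumofsequences}, with the pair of limit points replaced by the pair of indices $m,n$. Suppose $(x_n)$ converges to $x\in X$, and let a positive element $\epsilon$ of $M$ be given. Since $M$ is a dense unital magma, Definition~\ref{densemagmadef} supplies positive elements $\beta,\gamma\in M$ with $\beta*\gamma<\epsilon$. Applying convergence to $\beta$ yields a natural number $N_1$ with $d(x_n,x)<\beta$ for all $n\geq N_1$, and applying it to $\gamma$ yields $N_2$ with $d(x_n,x)<\gamma$ for all $n\geq N_2$. Put $N=\max\{N_1,N_2\}$.

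Now fix $m,n\geq N$. Using the symmetry axiom~(2) and then the triangle inequality axiom~(3) of Definition~\ref{magmavaluedmetricspacesdef} with $x$ as the middle point, one gets
\[
d(x_m,x_n)\leq d(x_m,x)*d(x,x_n)=d(x_m,x)*d(x_n,x).
\]
Since $m\geq N\geq N_1$ we have $d(x_m,x)\leq\beta$, and since $n\geq N\geq N_2$ we have $d(x_n,x)\leq\gamma$; compatibility of $\leq$ with $*$ then gives $d(x_m,x)*d(x_n,x)\leq \beta*d(x_n,x)\leq\beta*\gamma$. Chaining these inequalities with $\beta*\gamma<\epsilon$ yields $d(x_m,x_n)<\epsilon$, which is precisely the Cauchy condition of Definition~\ref{Cauchysequencedef}.

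The only point requiring care is that $*$ is assumed neither associative nor commutative, so the compatibility of the ordering must be invoked one factor at a time and on the correct side (first replacing $d(x_m,x)$ by $\beta$ on the left, then $d(x_n,x)$ by $\gamma$ on the right), and the triangle inequality must be applied in exactly the stated form, i.e. with the arguments in the order $d(x_m,x_n)\leq d(x_m,x)*d(x,x_n)$. Once the bookkeeping of sides and indices is set up correctly the argument is routine and, apart from using density directly rather than quoting Proposition~\ref{limitsumofsequences}, essentially the same as that proof; I expect no genuine obstacle beyond this bookkeeping.
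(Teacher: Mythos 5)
Your proof is correct and follows essentially the same route as the paper's: density supplies $\beta,\gamma$ with $\beta*\gamma<\epsilon$, convergence gives $N_1,N_2$, and the triangle inequality through the limit point together with compatibility of the order yields $d(x_m,x_n)\leq\beta*\gamma<\epsilon$. Your extra care about applying compatibility one factor at a time (since $*$ need not be associative or commutative) is a sound reading of the definitions and matches what the paper's chain of inequalities implicitly does.
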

	
	\begin{proof}
		Let $(x_n)$ be convergent to $a$. Then, for each positive element $\epsilon \in M$, there is a natural number $N$ such that $n \geq N$ implies that $d(x_n,a) < \epsilon$. For the positive element $\epsilon$, one can find two positive elements $\beta$ and $\gamma$ such that $\beta * \gamma < \epsilon$. On the other hand, for $\beta$ and $\gamma$, one can find natural numbers $N_1$ and $N_2$, respectively, such that if $m \geq N_1$ and $n \geq N_2$, then \[d(x_m,a) < \beta \text{~and~} d(x_n,a) < \gamma.\] Now, observe that \[d(x_m,x_n) \leq d(x_m,a)*d(x_n,a) \leq \beta * \gamma < \epsilon\] whenever $n \geq \max\{N_1,N_2\}$. This shows that $(x_n)$ is a Cauchy sequence and the proof is complete. 
	\end{proof}
	
	\begin{theorem}\label{CauchyconvergentsubsequenceOF}
		Let $(M,*,0,\leq)$ be a dense unital magma. In an $M$-metric space $X$, a Cauchy sequence having a convergent subsequence is convergent.	
	\end{theorem}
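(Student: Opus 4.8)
The plan is to run the classical argument in this magma-valued setting. Suppose $(x_n)$ is a Cauchy sequence in the $M$-metric space $X$ and that the subsequence $(x_{n_k})$ converges to some $a\in X$; the claim is that $\lim_{n\to+\infty}x_n=a$. Fix an arbitrary positive element $\epsilon\in M$. The first step is to split $\epsilon$ using density: by Definition \ref{densemagmadef} there are positive elements $\beta,\gamma\in M$ with $\beta*\gamma<\epsilon$.

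Next I would use the two hypotheses to bound the two relevant distances separately. Because $(x_n)$ is a Cauchy sequence (Definition \ref{Cauchysequencedef}), there is a natural number $N$ such that $d(x_m,x_n)<\beta$ whenever $m,n\geq N$. Because $(x_{n_k})$ converges to $a$ and $n_k\to+\infty$, I can pick one fixed index $n_{k_0}$ with $n_{k_0}\geq N$ and $d(x_{n_{k_0}},a)<\gamma$ (choose $k_0$ large enough to achieve both conditions at once). Then, for every $n\geq N$, the triangle inequality of Definition \ref{magmavaluedmetricspacesdef} together with the compatibility of $*$ with $\leq$ yields
\[
d(x_n,a)\leq d(x_n,x_{n_{k_0}})*d(x_{n_{k_0}},a)\leq \beta*\gamma<\epsilon .
\]
Hence $d(x_n,a)<\epsilon$ for all $n\geq N$, which is exactly what convergence to $a$ requires.

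I do not anticipate a real obstacle. The only points demanding care are that $M$ is neither assumed commutative nor associative, so $\beta$ must be matched with the first coordinate $d(x_n,x_{n_{k_0}})$ of the triangle inequality and $\gamma$ with the second coordinate $d(x_{n_{k_0}},a)$; and that deducing the strict inequality $d(x_n,a)<\epsilon$ from $d(x_n,a)\leq\beta*\gamma$ and $\beta*\gamma<\epsilon$ uses only transitivity and antisymmetry of $\leq$, exactly as in the proof of Proposition \ref{limitsumofsequences}.
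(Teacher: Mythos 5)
Your proof is correct and follows essentially the same route as the paper's: density splits $\epsilon$ into $\beta*\gamma$, Cauchyness controls $d(x_n,x_{n_{k_0}})$, subsequence convergence controls $d(x_{n_{k_0}},a)$, and the triangle inequality plus compatibility of $*$ with $\leq$ finishes. The only cosmetic difference is that you fix a single index $n_{k_0}$ while the paper keeps a running index $k\geq N$ (using $n_k\geq k$); both are the same argument.
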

	
	\begin{proof}
		Let $(x_n)$ be a Cauchy sequence in $X$. Also, assume that a subsequence $(x_{n_k})$ of $(x_n)$ is convergent to $x \in X$. Let $\epsilon$ be a positive element in $M$. Since $M$ is dense, we can find two positive elements $\beta$ and $\gamma$ with $\beta * \gamma < \epsilon$. For $\beta$, there is a natural number $N_1$ such that for $m,n \geq N_1$, we have $d(x_m,x_n) < \beta$. Also, for $\gamma$, there is a natural number $k_1$ such that $k \geq k_1$ implies that $d(x_{n_k},x) < \gamma$. Note that since $n_k$ is a strictly increasing sequence of positive integers, we have $n_k \geq k$ for each positive integer $k$. Set $N = \max\{N_1,k_1\}$ and observe that for any $m \geq N$ and $k \geq N$, we have \[d(x_m, x) \leq d(x_m , x_{n_k}) * d(x_{n_k}, x) \leq \beta * \gamma < \epsilon.\] Hence, $(x_n)$ is convergent, as required.
	\end{proof}
	
	\section{$M$-normed groups}\label{sec:normedgroups}
	
	\begin{definition}\label{Mnormedgroupdef}
		Let $(M,+,0,\leq)$ be an ordered unital magma. A (not necessarily abelian) group $(G,+)$ is an $M$-normed group if there is a function $\Vert \cdot \Vert: G \rightarrow M$ with the following properties:
		\begin{enumerate}
			\item $\Vert g \Vert \geq 0$, and $\Vert g \Vert = 0$ if and only if $g = 0$, for all $g\in G$,
			\item $\Vert g-h \Vert \leq \Vert g \Vert + \Vert h \Vert$, for all $g,h \in G$.
		\end{enumerate}
	\end{definition}
	
	\begin{proposition}\label{pseudonormevencontinuous}
		Let $M$ be an ordered unital magma and $G$ an $M$-normed group. Then, the following statements hold: \begin{enumerate}
			\item $\Vert -g \Vert = \Vert g \Vert$, for all $g\in G$.
			\item $\Vert g \Vert  \leq \Vert g-h \Vert + \Vert h \Vert$, for all $g,h \in G$.
		\end{enumerate}	
	\end{proposition}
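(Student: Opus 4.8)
The plan is to derive both statements directly from the two defining axioms of an $M$-normed group, taking care that neither $G$ nor the magma operation $+$ on $M$ is assumed commutative or associative, so every step must be a single application of an axiom or of a group identity.

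First I would prove (1). The idea is to specialize axiom (2) at $g = 0$: since $\Vert 0 \Vert = 0$ and $0$ is the identity of the unital magma $M$, the inequality $\Vert 0 - h \Vert \leq \Vert 0 \Vert + \Vert h \Vert$ collapses to $\Vert -h \Vert \leq \Vert h \Vert$, using that $0 - h = -h$ in the group $G$. Replacing $h$ by $-h$ and using $-(-h) = h$ gives the reverse inequality $\Vert h \Vert \leq \Vert -h \Vert$, and antisymmetry of $\leq$ then yields $\Vert -h \Vert = \Vert h \Vert$.

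Next I would prove (2). The key algebraic identity is $g = (g-h) - (-h)$, which holds in any (not necessarily abelian) group because \[(g + (-h)) + (-(-h)) = (g + (-h)) + h = g + ((-h) + h) = g.\] Applying axiom (2) to the pair of elements $g - h$ and $-h$ then gives $\Vert g \Vert = \Vert (g-h) - (-h) \Vert \leq \Vert g - h \Vert + \Vert -h \Vert$, and substituting $\Vert -h \Vert = \Vert h \Vert$ from part (1) finishes the argument.

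I do not expect any genuine obstacle: the only points needing a little attention are that none of the manipulations secretly invoke commutativity of $+$ on $M$ — each is a single use of an axiom or a group identity — and that part (2) is applied only after part (1) has been established, so that the final substitution $\Vert -h \Vert = \Vert h \Vert$ is legitimate.
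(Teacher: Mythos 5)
Your proposal is correct and follows essentially the same route as the paper's proof: part (1) by specializing the triangle axiom at $0$ and then applying it to $-g$, and part (2) via the decomposition $g=(g-h)-(-h)$ combined with part (1). The only cosmetic difference is that you spell out the antisymmetry step and the group identity explicitly, which the paper leaves implicit.
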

	
	\begin{proof}
		(1): For any $g\in G$, observe that \[\Vert -g \Vert = \Vert 0-g \Vert \leq \Vert 0 \Vert + \Vert g \Vert = 0 + \Vert g \Vert = \Vert g \Vert.\] On the other hand, from this we obtain that \[\Vert g \Vert = \Vert -(-g) \Vert \leq \Vert -g \Vert.\] This proves the first statement. 
		
		(2): Let $g$ and $h$ be elements of $G$. In view of the first statement, observe that \[\Vert g \Vert = \Vert g-0 \Vert = \Vert (g-h)-(-h) \Vert \leq \Vert g-h \Vert + \Vert h \Vert.\] This finishes the proof. 
	\end{proof}
	
	\begin{corollary}\label{reversetriangleinequality}
		Let $M$ be an ordered group and $G$ an $M$-normed group. Then, \[\Vert g \Vert -\Vert h \Vert\leq \Vert g-h \Vert, \qquad\forall~ g,h \in G.\]
	\end{corollary}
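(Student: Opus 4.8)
The plan is to obtain this as an immediate consequence of part~(2) of Proposition~\ref{pseudonormevencontinuous}, which already furnishes the inequality $\Vert g \Vert \leq \Vert g-h \Vert + \Vert h \Vert$ for all $g,h \in G$. The only thing left to do is to transpose $\Vert h \Vert$ to the other side, and the hypothesis that $M$ is an ordered \emph{group} — rather than merely an ordered unital magma — is precisely what licenses this step, since it guarantees both the existence of additive inverses in $M$ and the compatibility of the ordering with addition.

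Concretely, I would fix $g,h \in G$, start from $\Vert g \Vert \leq \Vert g-h \Vert + \Vert h \Vert$, and add the additive inverse $-\Vert h \Vert$ on the \emph{right} of both sides. Because $M$ is an ordered group, $a \leq b$ implies $a + c \leq b + c$; taking $c = -\Vert h \Vert$ gives $\Vert g \Vert + (-\Vert h \Vert) \leq \bigl(\Vert g-h \Vert + \Vert h \Vert\bigr) + (-\Vert h \Vert)$. Associativity of the group operation then collapses the right-hand side: $\bigl(\Vert g-h \Vert + \Vert h \Vert\bigr) + (-\Vert h \Vert) = \Vert g-h \Vert + \bigl(\Vert h \Vert + (-\Vert h \Vert)\bigr) = \Vert g-h \Vert + 0 = \Vert g-h \Vert$, which yields exactly $\Vert g \Vert - \Vert h \Vert \leq \Vert g-h \Vert$.

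The only subtlety, and the closest thing here to an obstacle, is that $M$ is permitted to be non-abelian, so one must be deliberate about the side on which the inverse is added: cancelling $\Vert h \Vert$ requires appending $-\Vert h \Vert$ on the right, whereas appending it on the left would give a right-hand side $-\Vert h \Vert + \Vert g-h \Vert + \Vert h \Vert$ that does not simplify. Since the compatibility of $\leq$ with right addition is part of the definition of an ordered group, no commutativity assumption is needed, and the argument is complete.
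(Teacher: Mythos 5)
Your proof is correct and follows the paper's own argument exactly: both start from Proposition~\ref{pseudonormevencontinuous}(2) and add $-\Vert h \Vert$ on the right of both sides, using the compatibility of the ordering with the group operation. Your remark about why the inverse must be appended on the right in the possibly non-abelian case is a welcome clarification of a point the paper only states tersely.
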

	
	\begin{proof}
		Observe that $\Vert g \Vert \leq \Vert g-h \Vert + \Vert h \Vert$. Since $M$ is a group, by adding $- \Vert h \Vert$ to the both sides of the latter inequality (from the right side), the desired inequality is obtained.
	\end{proof}
	
	\begin{proposition}\label{inducedmagmavaluedmetricspace}
		Let $(M,+,0,\leq)$ be an ordered unital magma and $(G,+)$ an $M$-normed group. Then, $(G,d)$ is an $M$-metric space, where \[d(g,h) = \Vert g - h \Vert.\]
	\end{proposition}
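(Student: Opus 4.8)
The plan is to verify the three conditions of Definition \ref{magmavaluedmetricspacesdef} one at a time, reading them off directly from the two defining properties of an $M$-norm in Definition \ref{Mnormedgroupdef}, with Proposition \ref{pseudonormevencontinuous} doing the bookkeeping needed because $G$ is allowed to be non-abelian. For the positivity axiom, property (1) of Definition \ref{Mnormedgroupdef} gives at once that $d(g,h)=\Vert g-h\Vert \geq 0$, and that $\Vert g-h\Vert = 0$ if and only if $g-h = 0$, i.e. if and only if $g=h$. For symmetry, one notes that in any group $-(g-h) = h-g$ (writing additively, $-(g+(-h)) = h+(-g)$), so Proposition \ref{pseudonormevencontinuous}(1) yields $d(g,h) = \Vert g-h\Vert = \Vert -(g-h)\Vert = \Vert h-g\Vert = d(h,g)$.

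For the triangle inequality the key observation is the group identity $(g-h)-(k-h) = g-k$, valid in any (not necessarily abelian) group since $-(k-h) = h-k$, hence $(g-h)-(k-h) = (g-h)+(h-k) = g-k$. Applying property (2) of Definition \ref{Mnormedgroupdef} to the elements $g-h$ and $k-h$ of $G$ then gives
\[
d(g,k) = \Vert g-k\Vert = \Vert (g-h)-(k-h)\Vert \leq \Vert g-h\Vert + \Vert k-h\Vert = \Vert g-h\Vert + \Vert h-k\Vert = d(g,h)*d(h,k),
\]
where the penultimate equality is Proposition \ref{pseudonormevencontinuous}(1) again, and where $*$ denotes the magma operation of $M$ (written additively in Definition \ref{Mnormedgroupdef}).

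There is essentially no obstacle here: apart from the two elementary group computations $-(g-h) = h-g$ and $(g-h)-(k-h) = g-k$, which are the only places where the possible non-commutativity of $G$ must be handled with care, every step is a direct substitution into the norm axioms.
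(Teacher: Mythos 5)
Your proof is correct and follows essentially the same route as the paper: both reduce the triangle inequality for $d$ to the subadditivity axiom of the norm by inserting the intermediate point $h$, with the remaining axioms read off directly. If anything, your version is slightly more careful than the paper's, which bounds $\Vert (g-h)+(h-k)\Vert$ by $\Vert g-h\Vert + \Vert h-k\Vert$ directly even though axiom (2) is stated for differences; your rewriting $g-k = (g-h)-(k-h)$ together with Proposition \ref{pseudonormevencontinuous}(1) makes that step legitimate in the non-abelian setting.
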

	
	\begin{proof}
		Let $g$, $h$, and $k$ be elements of the group $G$. Observe that \[d(g,k) = \Vert g-k \Vert = \Vert (g-h)+(h-k) \Vert \leq \Vert g-h \Vert + \Vert h-k \Vert.\] The proof of the other properties is straightforward. Hence, $(G,d)$ is an $M$-metric space, as required.
	\end{proof}
	
	\begin{proposition}\label{convergentisboudedinnormedgroups}
		Let $M$ be an ordered monoid, $(M,\leq)$ a join-semilattice, and $(x_n)$ a sequence in an $M$-normed group $G$. Then, the following statements hold:
		\begin{enumerate}
			\item If $(x_n)$ is convergent to $0\in G$, then there is a positive element $s$ in $M$ such that $\Vert x_n \Vert \leq s$, for all $n \in \mathbb N$.
			\item If $M$ is a group and $(x_n)$ is convergent to $a$, then there is a positive element $t$ in $M$ such that $\Vert x_n \Vert \leq t$.  
		\end{enumerate}
	\end{proposition}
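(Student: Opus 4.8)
The plan is to adapt the argument of Proposition~\ref{convergentisbounded}: in each case peel off the finitely many initial terms of the sequence, bound the tail by a single positive element of $M$ coming from the definition of convergence, and then take a finite supremum, which is legitimate precisely because $(M,\leq)$ is a join-semilattice. For part~(1), recall from Proposition~\ref{inducedmagmavaluedmetricspace} that the norm induces the $M$-metric $d(g,h)=\Vert g-h\Vert$, so that convergence of $(x_n)$ to $0$ says: for any positive $\epsilon\in M$ there is an $N$ with $\Vert x_n\Vert=d(x_n,0)<\epsilon$ whenever $n\geq N$. Fixing one such $\epsilon$ and $N$, I would set $s=\sup\{\Vert x_1\Vert,\dots,\Vert x_{N-1}\Vert,\epsilon\}$; this exists since only finitely many elements are involved, it dominates $\Vert x_n\Vert$ for every $n\in\mathbb N$, and it is positive because $0<\epsilon\leq s$.

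For part~(2), with $M$ now a group and $(x_n)$ convergent to $a$, the same reasoning gives an $N$ with $\Vert x_n-a\Vert=d(x_n,a)<\epsilon$ for $n\geq N$. The extra ingredient is the estimate $\Vert x_n\Vert\leq\Vert x_n-a\Vert+\Vert a\Vert$, which holds for every $n$ by Proposition~\ref{pseudonormevencontinuous}(2) (equivalently, by adding $\Vert a\Vert$ to both sides of Corollary~\ref{reversetriangleinequality}, which is where the group hypothesis on $M$ enters). Since $\leq$ is compatible with $+$, for $n\geq N$ this yields $\Vert x_n\Vert\leq\epsilon+\Vert a\Vert$, so setting $t=\sup\{\Vert x_1\Vert,\dots,\Vert x_{N-1}\Vert,\epsilon+\Vert a\Vert\}$ gives $\Vert x_n\Vert\leq t$ for all $n$; moreover $0\leq\Vert a\Vert$ together with monotonicity of $+$ gives $\epsilon\leq\epsilon+\Vert a\Vert\leq t$, so $t$ is positive.

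There is no real combinatorial difficulty here; the point that needs a moment's care is checking that the bounds produced are genuinely \emph{positive} and not merely non-negative, and in both parts this is forced by the fact that the supremum dominates a positive element ($\epsilon$ in part~(1), and $\epsilon+\Vert a\Vert$, which itself dominates $\epsilon$, in part~(2)). The two hypotheses are used minimally: the join-semilattice condition only to form the finite suprema, and the group structure of $M$ only to have the reverse triangle inequality of Corollary~\ref{reversetriangleinequality} available.
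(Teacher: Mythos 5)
Your argument is correct and is essentially the paper's own proof: the paper simply cites Proposition~\ref{convergentisbounded} for the bound on $d(x_n,a)$ and then, in part (2), adds $\Vert a\Vert$ via Corollary~\ref{reversetriangleinequality}, whereas you inline the finite-supremum argument and apply the triangle inequality $\Vert x_n\Vert\leq\Vert x_n-a\Vert+\Vert a\Vert$ to the tail before taking the join --- the same estimates in a slightly different order.
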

	
	\begin{proof}
		(1): Since $(x_n)$ converges to 0, by Proposition \ref{convergentisbounded}, there is a positive element $s\in M$ such that \[\Vert x_n \Vert = \Vert x_n - 0 \Vert \leq s, \qquad\forall~ n\in \mathbb N.\]
		(2): Let $(x_n)$ be a sequence in $G$ convergent to $a\in G$. Then, by Proposition \ref{convergentisbounded}, there is a positive element $s\in M$ such that \[\Vert x_n - a \Vert \leq s, \qquad\forall~ n\in \mathbb N.\] Since $M$ is a group, by Corollary \ref{reversetriangleinequality}, we have \[\Vert x_n \Vert - \Vert a \Vert \leq \Vert x_n - a \Vert \leq s.\] Set $t = s + \Vert a \Vert$. Then, we have $\Vert x_n \Vert \leq t$. This finishes the proof.
	\end{proof}
	
	Let $(G,+)$ be an abelian group and $\Seq(G)$ the set of all sequences over $G$. It is clear that $\Seq(G)$ with component-wise addition is an abelian group. 
	
	\begin{theorem}\label{Convergentsequencesabeliangroup}
		Let $(M,*,0,\leq)$ be a dense unital magma and $G$ an $M$-normed abelian group. Then, the following statements hold:
		
		\begin{enumerate}
			\item The set of all Cauchy sequences $\Cauchy(G)$ of $G$ is a subgroup of $\Seq(G)$.
			\item\label{addconvsequencesconv} If $(x_n)$ and $(y_n)$ are sequences in $G$ convergent to $a$ and $b$ in $G$, respectively, then the sequence $(x_n + y_n)$ is convergent to $a+b$.
			\item The set of convergent sequences, denoted by $\Conv(G)$, is a subgroup of $\Cauchy(G)$.
			\item If $\Zero(G)$ is the set of all sequences in $\Conv(G)$ which are convergent to $0 \in G$, then $\Conv(G)/\Zero(G) \cong G$.
		\end{enumerate}
	\end{theorem}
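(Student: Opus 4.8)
The plan is to handle the four statements more or less in the order stated, with statement (\ref{addconvsequencesconv}) --- additivity of limits --- serving as the computational engine that also powers (3) and (4). Throughout I regard the abelian group $G$ as an $M$-metric space via Proposition \ref{inducedmagmavaluedmetricspace}, i.e. $d(g,h)=\Vert g-h\Vert$, and the only analytic input I need is the one-step estimate
\[
d(x_m - y_m,\, x_n - y_n) \;\leq\; d(x_m, x_n) * d(y_m, y_n),
\]
valid for all sequences $(x_n),(y_n)$ in $G$. This follows from a single use of the triangle inequality, $d(x_m - y_m, x_n - y_n)\leq d(x_m - y_m, x_n - y_m) * d(x_n - y_m, x_n - y_n)$, together with the identifications $d(x_m - y_m, x_n - y_m)=\Vert x_m - x_n\Vert=d(x_m,x_n)$ and $d(x_n - y_m, x_n - y_n)=\Vert y_n - y_m\Vert=d(y_m,y_n)$, where the last step uses that the norm is even (Proposition \ref{pseudonormevencontinuous}(1)) and where commutativity of $G$ is exactly what licenses rearranging the group elements inside $\Vert\cdot\Vert$. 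An entirely analogous one-step estimate gives $d(x_n + y_n,\, a + b)\leq d(x_n,a) * d(y_n,b)$.

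For (1): the zero sequence is Cauchy, so $\Cauchy(G)\neq\emptyset$, and by the subgroup criterion it suffices to show that $(x_n),(y_n)\in\Cauchy(G)$ forces $(x_n - y_n)\in\Cauchy(G)$. Given a positive $\epsilon\in M$, I use density to pick positive $\beta,\gamma$ with $\beta*\gamma<\epsilon$, then an $N$ with $d(x_m,x_n)<\beta$ and $d(y_m,y_n)<\gamma$ for all $m,n\geq N$; the displayed estimate and compatibility of $\leq$ with $*$ give $d(x_m - y_m, x_n - y_n)\leq\beta*\gamma<\epsilon$. For (\ref{addconvsequencesconv}): given $\epsilon>0$ choose positive $\beta,\gamma$ with $\beta*\gamma<\epsilon$ and an $N$ past which $d(x_n,a)<\beta$ and $d(y_n,b)<\gamma$; then $d(x_n + y_n, a+b)\leq d(x_n,a)*d(y_n,b)\leq\beta*\gamma<\epsilon$. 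For (3): $\Conv(G)\subseteq\Cauchy(G)$ by Theorem \ref{convergentisCauchy}, and $\Conv(G)$ is a subgroup because it is nonempty (the zero sequence converges to $0$ by Example \ref{limitofconstantsequences}), because $(-y_n)$ converges to $-b$ whenever $(y_n)$ converges to $b$ (as $d(-y_n,-b)=d(y_n,b)$), and because (\ref{addconvsequencesconv}) then shows $(x_n - y_n)$ converges to $a-b$.

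For (4): define $\varphi\colon\Conv(G)\to G$ by $\varphi\big((x_n)\big)=\lim_{n\to+\infty}x_n$, which is well defined since limits in an $M$-metric space over a dense unital magma are unique (Proposition \ref{limitsumofsequences}). Statement (\ref{addconvsequencesconv}) says $\varphi$ is a group homomorphism, and $\varphi$ is surjective because the constant sequence at $a$ converges to $a$ (Example \ref{limitofconstantsequences}). Since $\varphi\big((x_n)\big)=0$ precisely when $(x_n)$ converges to $0$, we get $\ker\varphi=\Zero(G)$; in particular $\Zero(G)$ is a (normal) subgroup of $\Conv(G)$, and the first isomorphism theorem yields $\Conv(G)/\Zero(G)\cong G$. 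I do not expect a genuine obstacle here: the only thing to stay vigilant about is that $*$ is neither associative nor commutative, so every inequality above is arranged to combine exactly two $M$-values at a time, and the density hypothesis is used precisely to replace the classical ``$\epsilon/2+\epsilon/2$'' splitting.
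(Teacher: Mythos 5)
Your proposal is correct and follows essentially the same route as the paper: density supplies the split $\beta*\gamma<\epsilon$, a two-term triangle estimate (using that $G$ is abelian and $\Vert-g\Vert=\Vert g\Vert$) handles sums of Cauchy/convergent sequences, and the first isomorphism theorem applied to $\varphi\bigl((x_n)\bigr)=\lim_{n\to+\infty}x_n$ gives $\Conv(G)/\Zero(G)\cong G$. The only differences are cosmetic --- you verify the subgroup property via closure under subtraction and derive the key inequality through the induced metric rather than directly from the norm axiom, and you explicitly invoke Proposition \ref{limitsumofsequences} for well-definedness of $\varphi$, a point the paper leaves implicit.
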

	
	\begin{proof}
		(1): Let $(x_n)$ and $(y_n)$ be Cauchy sequences. Since $M$ is dense, for the given positive $\epsilon$ in $M$, we can find two positive elements $\beta$ and $\gamma$ such that $\beta * \gamma < \epsilon$. Also, we can find a natural number $N$ such that $m,n \geq N$ implies the following: \[\Vert x_m - x_n \Vert < \beta \text{~and~} \Vert y_m - y_n \Vert < \gamma.\] Now, observe that if $m,n \geq N$, we have \[\Vert (x_m + y_m) - (x_n + y_n) \leq \Vert x_m - x_n \Vert * \Vert y_m - y_n \Vert \leq \beta * \gamma < \epsilon.\] This shows that $(x_n + y_n)$ is a Cauchy sequence. It is obvious that the constant sequence $(0)$, where $0\in G$ is the identity element of $G$, is Cauchy, and if $(x_n)$ is Cauchy, then so is $(-x_n)$. Thus $\Cauchy(G)$ is a subgroup of $\Seq(G)$.
		
		(2): For the given positive element $\epsilon \in M$, we can find positive elements $\beta$ and $\gamma$ such that $\beta * \gamma < \epsilon$. It is evident that one can find a natural number $N$ such that if $n \geq N$, then \[\Vert x_n - a \Vert < \beta \text{~and~} \Vert y_n - b \Vert < \gamma.\] Now, observe that \[\Vert (x_n + y_n) - (a+b) \Vert \leq \Vert x_n - a \Vert * \Vert y_n - b \Vert \leq \beta * \gamma < \epsilon.\] This shows that $(x_n + y_n)$ is convergent to $a+b$. 
		
		(3): By Theorem \ref{convergentisCauchy}, each convergent sequence is a Cauchy sequence. So, $\Conv(G)$ is a subset of $\Cauchy(G)$. By (\ref{addconvsequencesconv}), $\Conv(G)$ is closed under addition of sequences. On the other hand, the constant sequence $(0)$ is in $\Conv(G)$, and, if $(x_n)$ is convergent to $a\in G$, then $(-x_n)$ is convergent to $-a$. Consequently, $\Conv(G)$ is a subgroup of $\Cauchy(G)$.
		
		(4): Define $\varphi: \Conv(G) \rightarrow G$ by $(x_n) \mapsto \lim_{n \to +\infty} x_n$. By (\ref{addconvsequencesconv}), $\varphi$ is a group homomorphism. For any $g\in G$, set $g_n  = g$. Then, by Example \ref{limitofconstantsequences}, $\varphi(g_n) = g$ showing that $\varphi$ is an epimorphism. The kernel of $\varphi$ is the set of all sequences that are convergent to $0 \in G$. Hence, by the fundamental theorem of homomorphisms, we have \[\Conv(G)/\Zero(G) \cong G,\] and the proof is complete.
	\end{proof}

	\begin{theorem}\label{Cauchynotconvergenttozerothm}
		Let $(M,+,0,\leq)$ be a dense and a totally ordered group and $(G,+)$ an $M$-normed group. Let $(x_n)$ be a Cauchy sequence not convergent to zero. Then, there is a positive element $\gamma$ in $M$ and a natural number $N$ such that $n \geq N$ implies $\Vert x_n \Vert > \gamma$.
	\end{theorem}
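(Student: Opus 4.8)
The plan is to extract quantitative information from the failure of $(x_n)$ to converge to $0$ and then use the Cauchy condition to propagate a positive lower bound for $\Vert x_n\Vert$ to all sufficiently large $n$. First I would unwind the negation of convergence to $0$. Since $M$ is totally ordered, negating the statement ``for every positive $\epsilon$ there is $N$ with $n \geq N \implies \Vert x_n \Vert < \epsilon$'' yields a positive element $\epsilon_0 \in M$ such that for every natural number $N$ there is an index $n \geq N$ with $\Vert x_n \Vert \geq \epsilon_0$; equivalently, $\Vert x_n \Vert \geq \epsilon_0$ holds for infinitely many $n$.

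Next I would invoke density. Applying Definition \ref{densemagmadef} to $\epsilon_0$ produces positive elements $\beta, \gamma \in M$ with $\beta + \gamma < \epsilon_0$. Using the Cauchy hypothesis for the positive element $\beta$, there is a natural number $N$ such that $m, n \geq N$ implies $\Vert x_m - x_n \Vert < \beta$. By the previous paragraph I may fix a single index $n_0 \geq N$ with $\Vert x_{n_0} \Vert \geq \epsilon_0$.

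Now let $n \geq N$ be arbitrary. Proposition \ref{pseudonormevencontinuous}(2) gives $\Vert x_{n_0} \Vert \leq \Vert x_{n_0} - x_n \Vert + \Vert x_n \Vert$, and since $\Vert x_{n_0} - x_n \Vert < \beta$ and $<$ is compatible with $+$, this yields $\epsilon_0 \leq \Vert x_{n_0} \Vert < \beta + \Vert x_n \Vert$. Combining with $\beta + \gamma < \epsilon_0$ gives $\beta + \gamma < \beta + \Vert x_n \Vert$, and adding $-\beta$ on the left (using that $M$ is a group and $<$ is compatible with the operation) gives $\gamma < \Vert x_n \Vert$. As $n \geq N$ was arbitrary, the positive element $\gamma$ and the natural number $N$ are as required.

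I do not expect a genuine obstacle here; the points demanding care are: correctly negating convergence, which relies on the totality of the order to turn $\neg(\Vert x_n\Vert < \epsilon_0)$ into $\Vert x_n\Vert \geq \epsilon_0$; using the variant triangle inequality of Proposition \ref{pseudonormevencontinuous}(2) rather than the defining inequality of Definition \ref{Mnormedgroupdef}; and performing the final cancellation on the left, since $M$ is not assumed abelian.
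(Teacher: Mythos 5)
Your proof is correct and follows essentially the same route as the paper's: negate convergence to $0$ using totality of the order, use density to produce a positive margin below $\epsilon_0$, invoke the Cauchy condition, and finish with the reverse triangle inequality of Proposition \ref{pseudonormevencontinuous}(2). The only (immaterial) differences are that you fix one witness $n_0\geq N$ with $\Vert x_{n_0}\Vert\geq\epsilon_0$ and split $\epsilon_0$ as $\beta+\gamma$ at the outset, whereas the paper picks a fresh index $k\geq n$ for each $n$ and chooses $\gamma$ below $-\beta+\epsilon$ at the end.
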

	
	\begin{proof}
		Since $(M,+,0,\leq)$ is a totally ordered group and $(x_n)$ is not convergent to zero, there is a positive element $\epsilon$ in $M$ such that for each $n \in \mathbb N$, there is a natural number $k$ such that \begin{align}
			k \geq n \text{~and~} \Vert x_k \Vert \geq \epsilon. \label{Cauchynotconvergenttozerothm1}
		\end{align}  By assumption $M$ is dense. Therefore, by Lemma \ref{densemonoidlem}, we can find a positive element $\beta$ in $M$ with $\beta < \epsilon$. Since $(x_n)$ is a Cauchy sequence, we can find a natural number $N$ such that \begin{align}
			\Vert x_p - x_q \Vert < \beta \qquad \forall~p,q \geq N. \label{Cauchynotconvergenttozerothm2}
		\end{align} Assume that $n$ is an arbitrary positive integer with $n \geq N$. By (\ref{Cauchynotconvergenttozerothm1}) and (\ref{Cauchynotconvergenttozerothm2}), there is a natural number $k$ such that \[\Vert x_k \Vert \geq \epsilon \text{~and~} -\Vert x_k - x_n \Vert > -\beta.\] Now, observe that for all $n \geq N$, we have \[\Vert x_n \Vert \geq - \Vert x_k - x_n \Vert + \Vert x_k \Vert  > - \beta + \epsilon  > 0.\] Let $\gamma$ be a positive element of $M$ smaller than $\epsilon - \beta$. Hence, there is a natural number $N$ such that $n \geq N$ implies $\Vert x_n \Vert > \gamma$, as required.
	\end{proof}
	
	\section*{Acknowledgments} The author is grateful to Prof. Winfried Bruns for his encouragements and the referees for their comments which improved the presentation of the paper.

\end{document}